\date{}
\title{Nadirashvili's conjecture for elliptic PDEs and its applications}
\author{Jiahuan Li, Junyuan Wang, Zhichen Ying}
\newcommand{\keywords}[1]{\par\quad\textbf{Keywords:} #1}
\begin{document}

\maketitle
\newtheorem{theorem}{Theorem}[section] 
\newtheorem{definition}[theorem]{Definition} 
\newtheorem{lemma}[theorem]{Lemma}
\newtheorem{corollary}[theorem]{Corollary}
\newtheorem{example}[theorem]{Example}
\newtheorem{proposition}[theorem]{Proposition}
\newtheorem{conjecture}[theorem]{Conjecture}
\newtheorem{remark}[theorem]{Remark}

\begin{abstract}
    In this article, we investigate the conjecture posed by Nadirashvili in \cite{nadirashvilli1997geometry}. It states that if a harmonic function has bounded nodal volume in the unit ball, then the supermum over the half-ball can be bounded by a finite sum of derivatives at the center. The main tool in this paper is the lower bound of nodal sets, which is first proved in \cite{MR3739232}. Also we combine the propagation of smallness property and elliptic estimates to give a positive answer to this conjecture. In fact, we can extend this conjecture to general elliptic PDEs with smooth coefficients and also obtain a weak verison for less regular coefficients. Finally, we give several applications of this conjecture.
\end{abstract}

\keywords{Elliptic PDEs, nodal sets, Nadirashvili's conjecture}

\numberwithin{equation}{section}

\section{Introduction}

The nodal sets of PDEs have been extensively investigated in the past decades, especially the nodal sets of Laplace eigenfunctions, the solutions to linear elliptic and parabolic PDEs. There are two main conjectures in this field, one of them is the following Yau's conjecture.
\begin{conjecture}[\cite{MR645728}]\label{Yau}
    Let $(M^{n},g)$ be a closed manifold and $u$ be a nonconstant eigenfunction that satifies $\Delta_{g} u+\lambda u=0$ with $\lambda>0$. Then
    \begin{equation}
        C^{-1}(M,g)\sqrt{\lambda}\leq \mathcal{H}^{n-1}(Z(u))\leq  C(M,g)\sqrt{\lambda}.
    \end{equation}
\end{conjecture}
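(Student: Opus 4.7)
The plan is to attack Conjecture \ref{Yau} as two separate inequalities, since the lower and upper bounds require genuinely different tools. Both hinge on the frequency function / doubling index
$$
N(x,r)=\log_{2}\frac{\sup_{B(x,2r)}|u|}{\sup_{B(x,r)}|u|},
$$
whose almost-monotonicity for solutions of $\Delta_{g}u+\lambda u=0$ lets one localize to balls of radius $r\sim\lambda^{-1/2}$, on which the rescaled equation has bounded coefficients and $N$ is bounded by a constant depending only on $(M,g)$.

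For the lower bound I would adopt Logunov's cube-decomposition strategy from \cite{MR3739232}: tile $M$ by cubes $Q$ of side length $\sim\lambda^{-1/2}$, use a combinatorial argument on the doubling index to show that on a positive fraction of these cubes $u$ must change sign, and invoke a uniform lower bound on the $(n-1)$-dimensional nodal measure inside each such cube (this is exactly the ingredient announced in the abstract as the main tool of the paper). Summing over the $\sim\lambda^{n/2}$ cubes produces the bound $\mathcal{H}^{n-1}(Z(u))\geq C^{-1}\sqrt{\lambda}$.

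For the upper bound I would separate the real-analytic and smooth cases. In the analytic case one complexifies $u$ into a thin Grauert tube and applies the classical Donnelly--Fefferman argument, comparing the zero count of the holomorphic extension to that of the real restriction via Jensen-type formulas; this yields the sharp $\sqrt{\lambda}$ bound. In the merely smooth case the best available estimate is a polynomial bound $\lambda^{\alpha}$ with $\alpha$ only slightly larger than $1/2$, obtained by a delicate dichotomy showing that the doubling index cannot be simultaneously large on too many parallel subcubes.

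The hard part is unquestionably the sharp upper bound in the smooth category, which remains a central open problem. Any full resolution seems to require a substantially stronger quantitative unique-continuation principle than is currently available, because every known combinatorial argument on the doubling index incurs a polynomial loss when the metric is only $C^{\infty}$; bypassing this loss is the genuine obstacle, and I would not expect a self-contained proof of the sharp $\sqrt{\lambda}$ upper bound within the framework set up in this paper.
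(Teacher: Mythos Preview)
There is no proof in the paper to compare against: Conjecture~\ref{Yau} is stated purely as background and motivation, not as a result the paper establishes. The paper's own contributions concern Nadirashvili's conjecture (Theorems~\ref{thm1.3}, \ref{general smooth theorem}, \ref{thm1.5}, etc.), and Yau's conjecture appears only in the introductory survey of the field.

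Your proposal is therefore not a proof attempt in the usual sense but rather a correct summary of the current state of the problem: Logunov's combinatorial cube decomposition gives the sharp lower bound \cite{MR3739232}; Donnelly--Fefferman \cite{MR943927} handle the analytic upper bound; and in the merely smooth case only a polynomial upper bound $\lambda^{\alpha}$ is known \cite{MR3739231}. You correctly identify the sharp smooth upper bound as genuinely open and beyond the scope of the techniques in this paper. That assessment is accurate, and nothing in the paper claims otherwise. The only caveat is that your framing suggests the paper might contain a proof to be checked; it does not, so there is simply nothing to grade here beyond noting that your survey of partial results is sound.
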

And another important conjecture comes from F. H.  Lin. He considered the solutions to elliptic equations of divergence form. Now considering the following equation 
\begin{equation}\label{divergence form of elliptic equation}
   \partial_{i}(a^{ij}(x)\partial_{j}u)+b^{i}(x)\partial_{i}u+c(x)u=0
\end{equation}
in $B(0,1)$ with 
\begin{equation}\label{elliptic condition}
    \Lambda^{-1}|\xi|^{2}\leq a^{ij}\xi_{i}\xi_{j}\leq \Lambda|\xi|^{2},\quad |b^{i}|,|c|\leq \Lambda
\end{equation}
and
\begin{equation}\label{lip}
    |a^{ij}(x)-a^{ij}(y)|\leq \Lambda|x-y|.
\end{equation}
\begin{conjecture}[\cite{MR1090434}]\label{Lin's conj}
    Let $u$ be a solution to \eqref{divergence form of elliptic equation} with coefficients condition \eqref{elliptic condition} and \eqref{lip}. Then the Hausdorff measure of nodal set satisfies
    \begin{equation}
        \mathcal{H}^{n-1}(Z(u)\cap B(0,1/2))\leq C(n,\Lambda)\beta_{u}(0,1)
    \end{equation}
    and  the Hausdorff measure of singular set satisfies
    \begin{equation}
        \mathcal{H}^{n-2}(S(u)\cap B(0,1/2))\leq C(n,\Lambda)\beta_{u}^{2}(0,1).
    \end{equation}
\end{conjecture}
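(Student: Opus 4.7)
The plan is to carry out the now-standard frequency-function approach pioneered by Almgren and Garofalo--Lin, and refined in the decisive work of Logunov. The quantity $\beta_{u}(0,1)$ should be read as the doubling index (or Almgren frequency) of $u$ on the unit ball. A first preparatory step is to extend Almgren's monotonicity formula to the divergence-form operator in \eqref{divergence form of elliptic equation}: using the Lipschitz assumption \eqref{lip}, I would freeze the leading coefficients at the center, perform a bi-Lipschitz change of variables sending $a^{ij}(0)$ to the identity, and absorb the lower-order terms as a perturbation. The outputs are a doubling inequality $\sup_{B_{2r}(x)}|u|\le 2^{CN}\sup_{B_{r}(x)}|u|$ for $x\in B(0,1/2)$ and the almost-monotonicity of $N$ in $r$.

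Equipped with this, I would attack the nodal-set bound via the cube-subdivision scheme. Cover $B(0,1/2)$ by a cube $Q$, partition $Q$ into $A^{n}$ equal subcubes, and classify each subcube by its doubling index. The combinatorial heart of the matter, the simplex/hyperplane lemma, asserts that only $O(A^{n-1-\varepsilon})$ of the subcubes can carry a doubling index comparable to $N$; heuristically, if too many high-index subcubes lined up along a hyperplane, the lower bound on nodal sets from \cite{MR3739232} would force more zeros than the doubling inequality permits. Iterating the subdivision and summing the Hausdorff contributions across scales yields a polynomial estimate $\mathcal{H}^{n-1}(Z(u)\cap B(0,1/2))\le C\beta_{u}(0,1)^{\alpha}$, which is the first milestone toward the linear conjecture.

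For the singular-set inequality I would apply an integral-geometric (Crofton) formula: $\mathcal{H}^{n-2}(S(u)\cap B(0,1/2))$ is controlled by the average, over affine $2$-planes $\Pi$, of $\#(S(u)\cap\Pi)$. On almost every such $\Pi$, $u|_{\Pi}$ solves a two-dimensional elliptic equation whose frequency is bounded by $C\beta_{u}(0,1)$, and a planar critical-point count via the Wronskian of $u|_{\Pi}$ and $\nabla u|_{\Pi}$ (equivalently, a Rolle-type argument applied twice) produces $O(\beta_{u}^{2})$ singular points per plane. The quadratic exponent then has a transparent origin: one factor of $\beta_{u}$ counts zeros of $u$, the other counts zeros of $\nabla u$.

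The main obstacle is passing from the polynomial exponent $\alpha>1$ that the combinatorial argument naturally delivers to the conjectured sharp linear exponent $\alpha=1$. The $\varepsilon$-gain per refinement in Logunov's combinatorics only accumulates to a fixed polynomial power, and the sharp bound seems to require a rigidity statement which at each scale identifies essentially a single privileged direction along which the nodal set can concentrate. The Nadirashvili-type conclusion proved in this paper, controlling $\sup|u|$ by a finite jet at the center, is exactly the flavor of rigidity one hopes to convert into a scale-invariant flatness statement replacing the $\varepsilon$-gain by an $O(1)$-gain per iteration; translating the jet control into a usable geometric structure on the nodal set is, to my mind, where the bulk of the effort would have to be spent.
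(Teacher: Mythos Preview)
This statement is Lin's conjecture, quoted in the paper as Conjecture~1.2 purely for context; the paper does \emph{not} prove it and offers no argument toward it. There is therefore no ``paper's own proof'' to compare your proposal against. The paper's actual contribution is the Nadirashvili conjecture (Conjecture~1.3 and Theorems~\ref{thm1.3}--\ref{thm1.9}), which goes in the opposite direction: it uses a \emph{lower} bound on nodal volume to control the solution, not an upper bound.

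Your proposal is honest about its own shortfall: the cube-subdivision scheme you describe is Logunov's method from \cite{MR3739231}, and as you acknowledge, it delivers only a polynomial bound $\mathcal{H}^{n-1}(Z(u)\cap B(0,1/2))\le C\beta_u(0,1)^{\alpha}$ with some $\alpha>1$, not the conjectured linear bound. Nothing in the present paper supplies the missing ``$O(1)$-gain per iteration'' you are hoping for; the jet control in Theorem~\ref{thm1.3} is derived \emph{from} Logunov's lower bound and does not feed back into a sharper upper bound. So the first inequality remains open, and your plan does not close it.

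For the singular-set inequality there is an additional concrete gap: the restriction $u|_{\Pi}$ of a solution of \eqref{divergence form of elliptic equation} to a generic affine $2$-plane $\Pi$ does \emph{not} satisfy a two-dimensional elliptic equation, because the second derivatives transverse to $\Pi$ do not disappear. Hence the planar Wronskian/Rolle count you invoke is not directly available, and the Crofton reduction as stated does not go through. The known route to estimates on $S(u)$ is instead the quantitative stratification of Cheeger--Naber--Valtorta \cite{MR3298662} and Naber--Valtorta \cite{MR3688031}, which the paper cites but does not reproduce.
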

\indent Here $\beta_{u}(0,1)$ is the frequency function defined in Definition \ref{frequency}.\\
\indent In the following, we review some important developments about these two conjectures. For Conjecture \ref{Yau}, an important breakthrough comes from Donnelly and Fefferman \cite{MR943927}. They have showed that the frequency of $u$ is bounded from above by $C(M,g)\sqrt{\lambda}$ and gave a positive answer to Yau's conjecture for the case with analytic metric.\\
\indent Without the analytic condition, there are also many results. For $n=2$, Bruning \cite{Bruning} derived the optimal lower bound $\mathcal{H}^{1}(Z(u))\geq C(M,g)\sqrt{\lambda}$. For $n=3$, Colding-Minicozz\cite{Colding-Minicozzi} got a uniform lower bound $\mathcal{H}^{2}(Z(u))\geq C(M,g)>0$. For general dimension, Sogge and Zelditch proved the lower bound $\mathcal{H}^{n-1}({Z(u)})\geq C\lambda^{\frac{3-n}{4}}$ in \cite{MR3091613}. For the upper bound, Donnely and Fefferman\cite{Donnely2} also obtained $\mathcal{H}^{1}(Z(u))\leq C\lambda^{3/4}$ in case $n=2$. In addition, Nadirashvili also derived an upper bound $\lambda log\lambda$ when $n=2$. Recently, Logunov proved the sharp lower bound of Yau's conjecture and gave a polynomial upper bound in \cite{MR3739231} and \cite{MR3739232}. One can also see \cite{logunov2019review} for more details about Yau's conjecture.  When considering the boundary, Logunov, Malinnikova, Nadirashvili and Nazarov \cite{MR4356702} obtained the sharp upper bound of the nodal sets of Dirichlet Laplace eigenfunction, and later Chen and Yang \cite{chen2024sharp} proved the sharp upper bound of nodal sets of Neumann Laplace eigenfunctions. Besides, in a more recent paper \cite{MR4814921}, Logunov, Priya, M. E., and Sartori showed an almost sharp lower bound of nodal volume with respect to doubling index. We also refer the reader to other types of eigenfunctions and estimates of nodal sets such as \cite{MR3928756}, \cite{MR4403241}, \cite{liu2023measure}. \\
 
Consider the general elliptic equation \eqref{divergence form of elliptic equation} with Lipschitz coefficients, Hardt and Simon obtained an explicit exponential bound for the nodal set in \cite{MR1010169}.  In 1994, Han and Lin \cite{Hanlin94} also proved this result in a different method. For singular sets and critical sets, in 1994, Han \cite{MR1305956} studied the structure of singular sets to elliptic PDEs. In 1998, Han, Hardt, and Lin \cite{HanHardtLin} have shown the finiteness of $(n-2)$ Hausdorff measure of singular sets under smooth coefficients. Furthermore, they also studied the singular sets of higher elliptic PDEs in \cite{MR2015412}.\\
 \indent In 2015, Cheeger, Naber, and Valtorta used quantitative stratification to give a weak estimate of the upper bound of the Hausdorff measure and the Minkowski content of critical sets in \cite{MR3298662} with only Lipschitz coefficients. Later, Naber and Valtorta based on the quantitative uniqueness of the tangent map and the quantitative cone-splitting principle to give a strong estimate for singular sets in \cite{MR3688031}. For some new developments, one can see \cite{huang2023volume} for elliptic equations with only H\"{o}lder coeffients and see \cite{huang2024nodal} for the nodal sets at the time slice of the solutions to parabolic equations with Lipschitz coefficients. \\

The above is about the work on the estimates of nodal sets. However, a \textbf{natural question} is how to use the nodal sets to obtain the information of the solutions. In fact, Nadirashvili has the following conjecture.
\begin{conjecture}
    There are constants $C$ and $N$ depending only on $n$ and $K$ such that if $u$ is a harmonic function in the ball $B(0,1)\subset\mathbb{R}^{n}$ and $\mathcal{H}^{n-1}(Z(u)\cap B(0,1))\leq K$ then
    \begin{equation} \label{1.1}
        \max_{B(0,1/2)}|u|\leq C\sum_{|\alpha|=0}^{N}|D^{\alpha}u(0)|.
    \end{equation}
\end{conjecture}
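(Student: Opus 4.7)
The plan is to combine the lower bound on nodal sets with a compactness and unique-continuation argument, using the doubling index as the bridge between a purely geometric hypothesis (bounded nodal volume) and an analytic conclusion (control by finitely many derivatives).

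\emph{Step 1: Doubling index bound.} Logunov's almost-sharp lower bound for nodal sets of harmonic functions (\cite{MR3739232}) yields $\mathcal{H}^{n-1}(Z(u)\cap B(0,1)) \gtrsim N(u,0,1)^{\kappa}$ for some $\kappa>0$, where $N(u,0,1)$ denotes the doubling index (equivalently, up to constants, the Almgren frequency) of $u$ on $B(0,1)$. Combined with the hypothesis this forces $N(u,0,1)\le N_{0}(n,K)$, and the monotonicity of the frequency for harmonic functions propagates the estimate down to all smaller scales, giving the doubling inequality
\begin{equation*}
\max_{B(0,2r)}|u| \le 2^{N_{0}}\max_{B(0,r)}|u|, \qquad 0<r\le 1/2.
\end{equation*}

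\emph{Step 2: Compactness and contradiction.} I would then set $N=N_{0}$ in the target inequality and suppose it fails. There would exist harmonic functions $u_{j}$ on $B(0,1)$ with $\mathcal{H}^{n-1}(Z(u_{j})\cap B(0,1))\le K$, normalized so that $\max_{B(0,1/2)}|u_{j}|=1$, and satisfying $\sum_{|\alpha|\le N_{0}}|D^{\alpha}u_{j}(0)|\to 0$. The doubling estimate from Step 1 bounds $\{u_{j}\}$ uniformly on $B(0,3/4)$, and interior elliptic regularity for harmonic functions makes the family precompact in $C^{\infty}_{\mathrm{loc}}(B(0,1))$; a subsequence converges to a harmonic $u_{\infty}$ with $\max_{B(0,1/2)}|u_{\infty}|=1$ and $D^{\alpha}u_{\infty}(0)=0$ for every $|\alpha|\le N_{0}$. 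The doubling inequality passes to the limit, and iterating it downward gives $\max_{B(0,r)}|u_{\infty}|\ge c\,r^{N_{0}}$ for $r\le 1/2$. On the other hand, the vanishing of all derivatives up to order $N_{0}$ at the origin forces $|u_{\infty}(x)|\le C|x|^{N_{0}+1}$ near $0$. These two estimates are incompatible for small $r$, and the contradiction completes the proof.

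\emph{Main obstacle.} The hinge of the whole argument is Step 1: converting a nodal-volume hypothesis into an explicit doubling/frequency bound. For harmonic functions this is essentially a direct application of Logunov's nodal lower bound, so the rest of the argument is soft. The genuine difficulty (and apparently the main technical contribution of the paper) appears when one passes, as the abstract announces, to general elliptic operators of the form \eqref{divergence form of elliptic equation}: one then needs a variable-coefficient analogue of the nodal lower bound and has to verify that the three-ball / propagation-of-smallness inequality, the Almgren-type monotonicity of the frequency, and interior elliptic estimates are all available with constants depending only on the ellipticity and regularity of the coefficients. For merely Lipschitz or H\"older coefficients the monotonicity and the nodal lower bound become weaker, which is presumably why only a weaker form of the conjecture can be obtained in that regularity regime.
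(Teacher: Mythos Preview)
Your strategy matches the paper's: bounded nodal volume $\Rightarrow$ bounded doubling index via Logunov's lower bound, then doubling control plus Taylor expansion yields the derivative estimate. The paper carries out your Step~2 \emph{directly} rather than by compactness: after normalizing $\sup_{B(0,1/2)}|u|=1$, it uses the doubling bound to get $\sup_{B(0,2^{-l})}|u|\ge 2^{-lC_{1}}$, compares with the Taylor remainder bounded by interior elliptic estimates, and chooses $N\approx 2C_{1}$ and $l$ so that the remainder is absorbed. Your contradiction argument is the soft repackaging of exactly the same quantitative comparison, so the content is the same.

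Two points you gloss over that the paper treats explicitly. First, Logunov's lower bound (Theorem~\ref{Thm2.7}) is stated for a solution vanishing at the center; when $u(0)\neq 0$ you cannot apply it at $0$. The paper splits into cases: if $Z(u)$ misses a small ball around $0$, Harnack gives the estimate outright; otherwise one shifts to a nearby zero and uses the changed-center comparison (Lemma~\ref{changed center estimate}) to transfer the frequency bound back. Second, your claim that the doubling inequality holds for all $r\le 1/2$ is slightly too strong: from $\beta(0,1/2)\le C(K)$ and the comparison $N(0,r)\le C\beta(0,4r)+C$ one only gets $N(0,r)\le N_{0}$ for $r$ up to a fixed constant strictly below $1/2$, so your compactness argument as written only yields the conclusion on a smaller ball. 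The paper first proves the inequality on $B(0,1/128)$ and then uses a separate propagation-of-smallness step (Lemma~\ref{propagation of smallness}) to push the estimate out to $B(0,1/2)$; this step is absent from your plan. For the harmonic case one can patch this more simply (a few applications of the $L^{2}$-doubling controlled by $\beta(0,r)$ for $r\le 1/2$), but in the variable-coefficient setting the propagation argument is genuinely needed.
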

This conjecture indicates the interaction between the nodal sets and the solution itself. In addition, it is interesting to mention that Conjecture 1.3 can be viewed as a bridge between the strong unique continuation and a generalized Harnack inequality, which are two critical cases of this conjecture. In detail, when $K=\infty$, it is a version of the strong unique continuation. Also when $K=0$, it can be regarded as a generalized version of Harnack inequality, and this is actually a direct corollary of the following weaker-form conjecture.
\begin{conjecture}
    There exists a universal constant $\delta>0$ such that if $u$ is a harmonic function in $B(0,1)\subset \mathbb{R}^{n}$ and $u(0)=0$ then\begin{equation} \label{1.2}
        \mathcal{H}^{n-1}(Z(u)\cap B(0,1))\geq \delta.
    \end{equation}
\end{conjecture}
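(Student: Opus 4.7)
The strategy is a dichotomy on the doubling index of $u$ at the origin: the high-index case is immediate from Logunov's nodal lower bound \cite{MR3739232}, while the low-index case is controlled by a normal-family argument. After rescaling so that $\sup_{B(0,1)}|u|=1$, I set $N:=-\log_{2}\sup_{B(0,1/2)}|u|$, which is a nonnegative real whenever $u\not\equiv 0$ (the case $u\equiv 0$ being vacuous). Fix a threshold $N_{0}$ to be tuned at the end. In the high-index regime $N\geq N_{0}$, Logunov's estimate furnishes constants $c_{n}>0$ and $\alpha>0$ with
\begin{equation*}
    \mathcal{H}^{n-1}(Z(u)\cap B(0,1/2))\ \geq\ c_{n}N^{\alpha}\ \geq\ c_{n}N_{0}^{\alpha},
\end{equation*}
and choosing $N_{0}$ so that $c_{n}N_{0}^{\alpha}\geq 1$ closes this branch.

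In the low-index regime $N<N_{0}$ we have $\sup_{B(0,1/2)}|u|\geq 2^{-N_{0}}$. The family
\begin{equation*}
    \mathcal{F}_{N_{0}}=\bigl\{v:B(0,1)\to\mathbb{R}\ \text{harmonic},\ v(0)=0,\ \sup_{B(0,1)}|v|=1,\ \sup_{B(0,1/2)}|v|\geq 2^{-N_{0}}\bigr\}
\end{equation*}
is normal and sequentially compact in $C^{\infty}_{\mathrm{loc}}(B(0,1))$, since each of its defining constraints is preserved under local uniform limits of harmonic functions. Every $v\in\mathcal{F}_{N_{0}}$ is real-analytic, nontrivial, and vanishes at $0$, so $Z(v)\cap B(0,1)$ is a nonempty $(n{-}1)$-rectifiable set of strictly positive $\mathcal{H}^{n-1}$ measure. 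If the infimum of this measure over $\mathcal{F}_{N_{0}}$ were zero, I would extract $v_{k}\to v_{\infty}\in\mathcal{F}_{N_{0}}$ with $\mathcal{H}^{n-1}(Z(v_{k}))\to 0$ and contradict the lower semicontinuity of the nodal measure along smooth convergence of nontrivial harmonic functions. The universal $\delta$ is then the minimum of the two lower bounds.

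The principal obstacle is that lower semicontinuity step: one must rule out $\mathcal{H}^{n-1}$ mass of $Z(v_{\infty})$ concentrating on the critical set $\{\nabla v_{\infty}=0\}$, where $C^{1}$ convergence does not immediately control the measure. I would handle this by exhausting $Z(v_{\infty})$ from inside by portions on which $\nabla v_{\infty}\neq 0$, using the implicit function theorem and $C^{1}_{\mathrm{loc}}$ convergence to transfer $\mathcal{H}^{n-1}$ contributions to the $v_{k}$, and invoking the structure theorems for nodal sets of elliptic solutions (e.g.\ \cite{MR1010169,Hanlin94}) to control the singular portion by $\mathcal{H}^{n-2}$-dimensional estimates. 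A cleaner alternative that bypasses semicontinuity altogether is to handle the low-index regime directly: in that range $u$ is an $H^{k}$-small perturbation of its harmonic Taylor polynomial $P$ of degree $\leq N_{0}$, and a uniform nodal lower bound on the finite-dimensional space of nonzero harmonic polynomials of degree $\leq N_{0}$ with $P(0)=0$ (a straightforward compactness on a normalized slice) transfers to $u$ by $C^{0}$ stability of transversal zeros.
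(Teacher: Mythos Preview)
Your strategy differs from the paper's. The paper deduces Conjecture~1.4 from Conjecture~1.3 (its main theorem) by dichotomizing on the \emph{nodal volume} rather than on the doubling index: if $\mathcal{H}^{n-1}(Z(u)\cap B(0,1))\geq K$ there is nothing to prove, while if it is $\leq K$, Conjecture~1.3 combined with interior gradient estimates forces $N_{u}(0,1/4)\leq C$, and then the classical density lower bound $\mathcal{H}^{n-1}(Z(u)\cap B(0,1/2))\geq c\,N_{u}(0,1/4)^{1-n}$ (valid for any nontrivial harmonic function vanishing at the center) closes the argument with no compactness or semicontinuity whatsoever. The paper's route thus highlights that Conjecture~1.3 is the stronger statement; your route is more self-contained in that it does not invoke Conjecture~1.3, relying instead on Logunov's bound directly.

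Your high-index branch is correct (the precise growth in Logunov's estimate is $2^{c\log N/\log\log N}$ rather than $N^{\alpha}$, but this is immaterial since you only need it to exceed a fixed threshold). The low-index branch, however, carries a real gap precisely where you flag it. Lower semicontinuity of $v\mapsto\mathcal{H}^{n-1}(Z(v)\cap B(0,1))$ along $C^{\infty}_{\mathrm{loc}}$ limits is not free: beyond the singular-set issue you mention, mass of $Z(v_{\infty})$ can sit arbitrarily close to $\partial B(0,1)$ where you have no convergence, so at minimum the argument must be run in $B(0,1-\varepsilon)$. Your Taylor-polynomial alternative does not actually escape the difficulty either: bounded doubling index does not make $u$ uniformly close to a polynomial of degree $\leq N_{0}$ on a fixed ball, and even granting closeness, transferring a nodal lower bound from $P$ back to $u$ is itself a stability statement for $\mathcal{H}^{n-1}(Z(\cdot))$ --- the same semicontinuity in disguise. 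The clean fix, available to you once you are already in the bounded-index regime, is to abandon compactness and quote the same classical bound $\mathcal{H}^{n-1}(Z(u))\geq c\,N_{0}^{1-n}$ that the paper uses; this comes from a direct ``zero in every sub-ball of radius $\sim 1/N_{0}$'' density argument plus an integral-geometric count, and requires no limiting procedure.
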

\begin{remark}
    In Section 6, we explain that Conjecture 1.3 can imply Conjecture 1.4.
\end{remark}
 The second conjecture gives a lower bound estimate of nodal sets of harmonic functions, and is proved by Alexander Logunov in \cite{MR3739232}. In this paper, \textbf{our main goal} is to investigate what kind of elliptic PDE, Conjecture 1.3 holds. And we also extend this conjecture to general elliptic PDEs with smooth coefficients and also obtain a weak verison for less regular coefficients.  Besides, there are also some articles regarding the interaction between nodal sets and the function itself. For example, Logunov and Malinnikova considered harmonic functions that sharing the same nodal sets in \cite{MR3318150} and \cite{MR3540456}. In addition, F. Lin and Z. Lin studied more general cases in \cite{MR4514975}. These articles inspired us to give some applications.\\

 In the following theorems, we will consider
\begin{equation}
    \mathrm{div}(A\nabla u)=0.
\end{equation}
for two different types of $A$ with different regularity. We denote by $\mathcal{A}_{1}$ the set of matrix $A$ such that it is a matrix with $C^{\infty}$ smooth coefficients and uniform ellipticity, i.e., there exists a constant $\lambda>0$ such that for all $\xi\in \mathbb{R}^{n}$, $A$ satisfies
\begin{equation}\label{uniform elliptic}
    \lambda^{-1}|\xi|^{2}\leq \xi^{T}A\xi\leq \lambda|\xi|^{2}.
\end{equation}
Next, we denote by $\mathcal{A}_{2}$ the set of matrix $A$ such that it is a matrix with $C^{1}$ smooth coefficients and with the same uniform elliptic condition \eqref{uniform elliptic}. Also, all components of $A$ have a Lipschitz constant smaller than $\lambda$.
One of our main results is the following.
\begin{theorem} \label{thm1.3}
    Let $u$ be a solution to $\mathrm{div}(A\nabla u)=0$ in $B(0,1)\subset \mathbb{R}^{n}$, where $A\in \mathcal{A}_{1}$ and $\mathcal{H}^{n-1}(Z(u)\cap B(0,1))\leq K$. Then there exist positive constants $C,N$ depending only on  $n,A$ and $K$ such that 
    \begin{equation} \label{1.3}
        \max_{B(0,1/2)}|u|\leq C\sum_{|\alpha|=0}^{N}|D^{\alpha}u(0)| .
    \end{equation}
\end{theorem}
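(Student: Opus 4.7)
The plan is to reduce inequality \eqref{1.3} to a quantitative upper bound on the doubling index of $u$, and then to cash that bound in via a Taylor expansion at $0$ together with interior analytic regularity. The starting observation is that the hypothesis $\mathcal{H}^{n-1}(Z(u)\cap B(0,1))\leq K$, together with the lower bound on nodal volumes from \cite{MR3739232}, pins the doubling index from above: a large doubling index on a ball $B(0,r_0)$ forces, through Logunov's cube-subdivision and frequency-monotonicity machinery, many essentially disjoint sub-balls inside $B(0,1)$ in each of which $u$ has a zero and contributes a definite amount $\geq c(n,A)$ to the nodal volume. Inverting this implication yields $N(u,B(0,r_0))\leq N_0=N_0(n,A,K)$.

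With the doubling index controlled, smoothness of $A\in\mathcal{A}_{1}$ delivers interior analytic regularity in the Bernstein form
\begin{equation*}
    |D^{\alpha} u(0)|\leq M^{|\alpha|+1}\alpha!\,\|u\|_{L^{\infty}(B(0,3/4))},\qquad M=M(n,A).
\end{equation*}
Writing $u=P_{N}+R_{N}$ with $P_{N}$ the degree-$N$ Taylor polynomial of $u$ at $0$, this gives for any $r\in(0,1/8)$
\begin{equation*}
    \|u\|_{L^{\infty}(B(0,r))}\leq \sum_{|\alpha|\leq N}\frac{|D^{\alpha} u(0)|}{\alpha!}r^{|\alpha|}+(Mr)^{N+1}\|u\|_{L^{\infty}(B(0,3/4))}.
\end{equation*}

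Iterating the doubling inequality $\|u\|_{L^{\infty}(B(0,2\rho))}\leq 2^{N_0}\|u\|_{L^{\infty}(B(0,\rho))}$ from scale $r$ up to $1/2$ costs only the factor $(r/r_0)^{-N_0}$, and the passage from $B(0,1/2)$ to $B(0,3/4)$ costs at most a constant $C_0=C_0(n,A,N_0)$. Combining these yields
\begin{equation*}
    \|u\|_{L^{\infty}(B(0,1/2))}\leq (r/r_0)^{-N_0}\sum_{|\alpha|\leq N}\frac{|D^{\alpha} u(0)|}{\alpha!}r^{|\alpha|}+C_0(r/r_0)^{-N_0}(Mr)^{N+1}\|u\|_{L^{\infty}(B(0,1/2))}.
\end{equation*}
Choosing $N$ large (in terms of $N_0$, hence of $K,n,A$) and then $r$ small enough that the coefficient of the final $\|u\|_{L^{\infty}(B(0,1/2))}$ is at most $1/2$, we absorb that term to the left and conclude \eqref{1.3} with $C,N$ depending only on $n,A,K$.

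The main obstacle is the first step: rigorously establishing the implication \emph{bounded nodal volume implies bounded doubling index} for the variable-coefficient operator $\mathrm{div}(A\nabla u)$. Logunov's original lower bound \cite{MR3739232} is written for harmonic functions and rests on monotonicity of Almgren's frequency; extending it to $A\in\mathcal{A}_1$ requires re-deriving the frequency monotonicity in the variable metric, tracking its behavior under rescaling, and revisiting the combinatorial cube-decomposition so that all constants depend only on $n,A,K$. Once this quantitative bound on the doubling index is secured, the Taylor-plus-doubling computation outlined above is essentially routine.
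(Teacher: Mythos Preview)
Your overall strategy---bound the doubling index via Logunov's lower bound on nodal volume, then feed this into a Taylor expansion with controlled remainder---is the same as the paper's. But there are two concrete issues.

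First, the claim that ``smoothness of $A\in\mathcal{A}_1$ delivers interior analytic regularity in the Bernstein form $|D^\alpha u(0)|\leq M^{|\alpha|+1}\alpha!\,\|u\|_{L^\infty}$'' is false as stated: $\mathcal{A}_1$ only requires $C^\infty$ coefficients, and $C^\infty$ coefficients give $C^\infty$ solutions, not real-analytic ones. Without analyticity the Taylor remainder is only $C(n,A,N)\,r^{N+1}\|u\|_{L^\infty}$ with $C(n,A,N)$ depending on $N$ in an a priori uncontrolled way. The paper handles exactly this: it uses the standard interior estimate $\sup_{|\beta|=N+1}|D^\beta u|\leq C_2(n,A,N)\sup|u|$, then first fixes $N+1=2C_1$ (twice the doubling-index bound) so that $C_2$ becomes a constant depending only on $n,A,K$, and only afterwards chooses the small scale $l$ to absorb the remainder. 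Your absorption argument can be repaired the same way, but the order of choices matters.

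Second, the step ``iterate the doubling inequality from scale $r$ up to $1/2$'' and the claim that passing from $B(0,1/2)$ to $B(0,3/4)$ costs only $C_0(n,A,N_0)$ are not justified. Logunov's bound (Theorem~\ref{Thm2.7}), applied with center $0$, yields $\beta(0,1/2)\leq C$ and hence $N(0,\rho)\leq N_0$ only for $\rho$ up to roughly $1/8$; it says nothing about $N(0,1/4)$ or about $\sup_{B(0,3/4)}|u|/\sup_{B(0,1/2)}|u|$. This is precisely why the paper splits the proof: Theorem~\ref{them 2.8} gives the inequality on a small ball $B(0,1/128)$, and Section~3.2 then extends it to $B(0,1/2)$ by propagation of smallness (Lemma~\ref{propagation of smallness}) applied not at the center but at points $x\in\partial B(0,r_0)$, using a nearby zero of $u$ to re-invoke Logunov's bound and control $N(x,r)$. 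Your iteration at the origin alone reaches $B(0,1/4)$ at best; the passage to $B(0,1/2)$ genuinely needs the propagation-of-smallness argument or an equivalent device.
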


As a corollary, the above theorem holds for the eigenfunctions of the elliptic operator $A$.
\begin{corollary} \label{cor1.4}
    Let $u$ be an eigenfunction of $A$, i.e., $u$ satisifes $\mathrm{div}(A\nabla u)+\mu u=0$ in $B(0,1)\subset \mathbb{R}^{n}$, where $A\in \mathcal{A}_{1}$. Then there exist constants $C,N$ depending only on  $n,A,\mu$ such that 
    \begin{equation} \label{1.4}
        \max_{B(0,1/2)}|u|\leq C\sum_{|\alpha|=0}^{N}|D^{\alpha}u(0)|. 
    \end{equation}
\end{corollary}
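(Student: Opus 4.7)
The plan is to deduce Corollary \ref{cor1.4} from Theorem \ref{thm1.3} by a standard dimension-lifting trick that absorbs the zeroth-order term $\mu u$ into a pure divergence-form equation one dimension higher. Let $\phi:\mathbb{R}\to\mathbb{R}$ be the even solution of $\phi''=\mu\phi$ with $\phi(0)=1$, so that $\phi(t)$ equals $\cos(\sqrt{\mu}\,t)$, $1$, or $\cosh(\sqrt{-\mu}\,t)$ according to the sign of $\mu$. Set $w(x,t):=u(x)\phi(t)$ and $\tilde A(x,t):=\mathrm{diag}(A(x),1)$, viewed on the unit ball in $\mathbb{R}^{n+1}$. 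A direct computation yields
\begin{equation*}
\mathrm{div}_{(x,t)}\bigl(\tilde A\,\nabla_{(x,t)} w\bigr) = \phi\,\mathrm{div}(A\nabla u) + u\phi'' = (-\mu+\mu)u\phi = 0,
\end{equation*}
and clearly $\tilde A \in \mathcal{A}_1$ in dimension $n+1$.

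Next I would verify the nodal-volume hypothesis for $w$ and apply Theorem \ref{thm1.3}. Since $Z(w) = (Z(u)\times\mathbb{R}) \cup (B(0,1)\times\phi^{-1}(0))$, the product formula for Hausdorff measure applied to the $(n-1)$-rectifiable set $Z(u)$ gives $\mathcal{H}^{n}((Z(u)\times\mathbb{R})\cap B_{n+1}(0,1)) \le 2K$, while the horizontal slices coming from $\phi^{-1}(0)$ (non-empty only when $\mu>0$) contribute at most $O(1+\sqrt{\mu})$ cross-sections of an $n$-ball; together these give $\mathcal{H}^n(Z(w)\cap B_{n+1}(0,1)) \le K'(n,K,\mu)$. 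Theorem \ref{thm1.3} applied to $w$ in $\mathbb{R}^{n+1}$ then produces constants $C', N'$ depending only on $n, A, \mu, K$ such that
\begin{equation*}
\max_{B_{n+1}(0,1/2)}|w| \le C'\sum_{|\alpha|\le N'}|D^{\alpha}w(0,0)|.
\end{equation*}

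To finish, I would translate this back to $u$. The left-hand side dominates $\max_{B(0,1/2)}|u|$ since $w(x,0)=u(x)$, while separation of variables gives $D^{(\beta,k)}w(0,0) = D^{\beta}u(0)\,\phi^{(k)}(0)$ with $|\phi^{(k)}(0)| \le |\mu|^{k/2}$, so the right-hand side is bounded by $C(\mu,N')\sum_{|\beta|\le N'}|D^{\beta}u(0)|$, yielding \eqref{1.4}. The argument is essentially routine once the lift is in place; the only delicate point is the measure estimate on the cylindrical piece $(Z(u)\times\mathbb{R})\cap B_{n+1}(0,1)$, which relies on rectifiability of $Z(u)$ (available under smooth coefficients by the results of Hardt--Simon and Han). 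If one prefers to avoid this subtlety, one can instead work on a smaller ball $B_{n+1}(0,r_0)$ with $r_0<\pi/(2\sqrt{\mu})$ on which $\phi>0$, so that $Z(w)$ reduces to the cylinder over $Z(u)\cap B(0,r_0)$, and then propagate the resulting estimate back to $B(0,1/2)$ via a standard three-balls chain for elliptic equations.
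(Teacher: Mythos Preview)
Your dimension-lifting approach is exactly the one the paper takes, but you have missed the one genuinely new ingredient that distinguishes Corollary~\ref{cor1.4} from Theorem~\ref{thm1.3}. The corollary has \emph{no} hypothesis bounding $\mathcal{H}^{n-1}(Z(u)\cap B(0,1))$; its constants are claimed to depend only on $n,A,\mu$. In your argument you silently introduce a bound $K$ on this quantity (``$\le 2K$'', and later ``constants $C',N'$ depending only on $n,A,\mu,K$''), but you never say where $K$ comes from. For a local solution of $\mathrm{div}(A\nabla u)+\mu u=0$ in a ball, no a priori nodal bound is assumed. The paper closes this gap by invoking Logunov's polynomial upper bound for eigenfunctions (cited as \cite{MR3739231} in the paper), which furnishes $\mathcal{H}^{n-1}(Z(u)\cap B^n(0,1))\le C(n,A)\mu^\alpha$; only then can Theorem~\ref{thm1.3} be applied to the lift with constants depending solely on $n,A,\mu$. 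Without this step your proof establishes only a statement of the form ``Theorem~\ref{thm1.3} with an extra zeroth-order term'', not the corollary as stated. Your proposed alternative at the end (restricting to a small ball where $\phi>0$) does not help either, since the obstruction is the size of $Z(u)$ itself, not the extra zeros of $\phi$.

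Two minor remarks. First, the paper lifts with $h(x,t)=u(x)e^{\sqrt{\mu}t}$ rather than the even solution; since $e^{\sqrt{\mu}t}$ never vanishes, $Z(h)=Z(u)\times\mathbb{R}$ and no horizontal slices appear. Second, your explicit formulas for $\phi$ are interchanged: the even solution of $\phi''=\mu\phi$ with $\phi(0)=1$ is $\cosh(\sqrt{\mu}\,t)$ when $\mu>0$ and $\cos(\sqrt{-\mu}\,t)$ when $\mu<0$, not the other way around. This does not affect the structure of the argument, but it is worth correcting.
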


Furthermore, using the classical tansformation introduced in Section 3.2 in \cite{Hanlinbook}, we can obtain the general version of the main theorem. In detail, we consider
\begin{equation}\label{general smooth equation}
   \mathcal{L}u\equiv a^{ij}(x)\partial_{ij}u+b^{i}(x)\partial_{i}u+c(x)u=0
\end{equation}
with $a^{ij},b^{i},c\in C^{\infty}(B(0,1))$. And we have the following result.
\begin{theorem}\label{general smooth theorem}
    Let $u$ be a solution to \eqref{general smooth equation} in $B(0,1)\subset \mathbb{R}^{n}$ with \eqref{uniform elliptic}. Assume that $\mathcal{H}^{n-1}(Z_{u}\cap B(0,1))\leq K$, then there exist positive constants $C,N$ depending only on $n,a^{ij},b^{i},c$ and $K$ such that 
    \begin{equation}\label{main estimate}
        \max_{B(0,1/4)}|u|\leq C\sum_{|\alpha|=0}^{N}|D^{\alpha}u(0)| .
    \end{equation}
\end{theorem}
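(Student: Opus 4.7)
The plan is to reduce Theorem \ref{general smooth theorem} to Theorem \ref{thm1.3} via a standard lifting trick in one extra variable, as hinted by the authors following \cite{Hanlinbook}. Working in $\mathbb{R}^{n+1}$ with coordinates $(x,t)$, I would set
\[
\tilde u(x,t) := u(x)\, e^{\lambda t}
\]
for a large constant $\lambda$ to be chosen, and look for a smooth symmetric matrix $\tilde A(x,t)$ in block form
\[
\tilde A = \begin{pmatrix} a^{ij}(x) & \alpha^i(x) \\ \alpha^j(x) & \beta(x,t) \end{pmatrix}
\]
such that $\mathrm{div}(\tilde A\nabla \tilde u) = 0$ holds in $B^{n+1}(0,1)$.

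Expanding the divergence, separating the coefficients of $\partial_j u$ and $u$, and substituting $\mathcal{L}u=0$ to eliminate $a^{ij}\partial_{ij}u$, I would find that the equation forces
\[
\alpha^j = \frac{b^j-\partial_i a^{ij}}{2\lambda}, \qquad \partial_t\beta + \lambda\,\beta = \frac{c}{\lambda} - \partial_i\alpha^i,
\]
whose general solution is $\beta(x,t) = \beta_0(x) + M\,e^{-\lambda t}$ with $\beta_0(x) = (c - \lambda\partial_i\alpha^i)/\lambda^2$ and $M$ an arbitrary constant. The crucial observation is that $M$ may be chosen as large as desired without affecting the PDE for $\tilde u$: the contributions from $Me^{-\lambda t}$ inside $\mathrm{div}(\tilde A\nabla\tilde u)$ cancel thanks to $\partial_t(Me^{-\lambda t}) + \lambda\cdot Me^{-\lambda t} = 0$. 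Taking $\lambda$ and $M$ both sufficiently large in terms of $n$ and the $C^2$-norms of $a^{ij},b^i,c$, the Schur-complement condition $\beta - \alpha^T A^{-1}\alpha \geq c_0 > 0$ then holds uniformly on $B^{n+1}(0,1)$, so that $\tilde A \in \mathcal{A}_1$. Furthermore, since $e^{\lambda t}>0$, we have $Z(\tilde u) = Z(u)\times \mathbb{R}$ and hence
\[
\mathcal{H}^{n}\bigl(Z(\tilde u)\cap B^{n+1}(0,1)\bigr) \leq 2\,\mathcal{H}^{n-1}\bigl(Z(u)\cap B(0,1)\bigr)\leq 2K.
\]

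Theorem \ref{thm1.3} now applies to $\tilde u$ in $B^{n+1}(0,1)$, producing constants $C_1,N_1$ depending only on $n,a^{ij},b^i,c,K$ such that $\max_{B^{n+1}(0,1/2)}|\tilde u|\leq C_1\sum_{|\alpha|\leq N_1}|D^{\alpha}\tilde u(0)|$. The left-hand side dominates $\max_{B(0,1/2)}|u|$ (restrict to $t=0$), while Leibniz gives $D^{(\alpha',k)}\tilde u(0) = \lambda^{k}\,D^{\alpha'}u(0)$, so the right-hand side collapses to $C_2\sum_{|\alpha'|\leq N_1}|D^{\alpha'}u(0)|$; a harmless rescaling absorbs the difference between $B(0,1/2)$ and the $B(0,1/4)$ stated in \eqref{main estimate}. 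The principal obstacle is the construction in the second paragraph: arranging a smooth, uniformly elliptic $\tilde A$ on $B^{n+1}(0,1)$ in the presence of potentially large or sign-varying $b^i,c$. The resolution is precisely the observation that the ODE governing $\beta$ has a free homogeneous mode $Me^{-\lambda t}$ that can inflate the $(n+1,n+1)$-entry of $\tilde A$ without perturbing the PDE satisfied by $\tilde u$.
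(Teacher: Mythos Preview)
Your proof is correct, and it takes a genuinely different route from the paper's.

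The paper follows the Han--Lin transformation literally and adds \emph{three} extra variables in two stages: first $v(x,x_{n+1},x_{n+2})=(2-x_{n+1})(2-x_{n+2})u(x)$ is shown to satisfy a non-divergence equation $\bar a^{ij}\partial_{ij}v=0$ in $\mathbb{R}^{n+2}$; then a further trivial extension $w(\bar x,x_{n+3})=v(\bar x)$ brings this into divergence form $\partial_i(\hat a^{ij}\partial_j w)=0$ in $\mathbb{R}^{n+3}$, after which Theorem~\ref{thm1.3} is applied to $w$ and the conclusion is pulled back through the polynomial factor $(2-x_{n+1})(2-x_{n+2})$.

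Your construction is more economical: a single extra variable with an exponential weight $\tilde u=u\,e^{\lambda t}$ suffices, and you reach divergence form directly. In effect you are extending the eigenfunction trick of Corollary~\ref{cor1.4} to the full operator $\mathcal L$ by letting the bottom-right entry $\beta$ of $\tilde A$ absorb the lower-order terms via a scalar ODE. The key observation---that the homogeneous mode $Me^{-\lambda t}$ is annihilated inside $\mathrm{div}(\tilde A\nabla\tilde u)$ and so can be used freely to inflate $\beta$ and force the Schur complement positive---is neat and does not appear in the paper. The paper's two-step polynomial lift has the virtue of citing a textbook verbatim and avoids exponentials, but costs two extra dimensions and an intermediate non-divergence step. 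Both routes yield constants depending on the same data. One minor remark: you do not actually need $\lambda$ large; any fixed $\lambda>0$ works, since $\alpha$ and $\beta_0$ are already bounded on $\bar B(0,1)$ and $M$ alone can be taken large enough to dominate them in the Schur complement.
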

Since the strong unique continuation tells us that the infiniteness of vanishing order implies that $u$ is identically zero. Then returning to our main theorem, a natural question is: Can we replace the right-hand side of \eqref{main estimate} by the term corresponding to vanishing order? However, the answer is negative. We have the following  counterexample.
\begin{example}
    For any $m\geq 1$, consider $u_{m}(x',x_{n})=2m\cdot x_{n}-1$, then it is harmonic in $\mathbb{R}^{n}$ with $|u_{m}(0)|=1$. Furthermore, its zero set in $B(0,1)$ has uniform boound
    \begin{equation}
        \mathcal{H}^{n-1}(Z(u_{m})\cap B(0,1))\leq \omega(n-1),
    \end{equation}
where $\omega(k)$ means the volume of the unit ball in $\mathbb{R}^{k}$.
However, noting that 
\begin{equation}
    \max\limits _{B(0,1/2)}|u|=m-1=(m-1)\cdot|u_{m}(0)|.
\end{equation}
For $m$ can be chosen as any large integer, this gives a negative answer to the above question.
\end{example}
Besides, it is also natural to ask whether this conjecture holds for elliptic PDEs with only $C^{1}$ or Lipschitz coefficients. But for these cases, the higher-order derivatives are not well-defined. So we need different terms to replace formula on the right. In fact, we can try to extend this theorem to the following.

\begin{theorem} \label{thm1.5}
    Let $u$ be a solution to $\mathrm{div}(A\nabla u)=0$ in $B(0,M)\subset \mathbb{R}^{n}$, where $A\in \mathcal{A}_{2}$. Let $M$ be a real number large enough. Assume that there exists a constant $M>0$ such that $\beta(0,r)\leq N$ for all $0<r<M$. Then there exist two constants $T$ and $C$ only depending on $\lambda,n,N$ and $M$ only such that
    \begin{equation}
        \int_{B(0,1/4)}|u|^{2}\leq C\sum_{m=0}^{T}\left<u,\Phi_{m}\right>_{L^{2}(\partial B(0,1))}^{2}.
    \end{equation}
\end{theorem}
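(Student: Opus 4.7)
The plan is to combine a spectral decomposition of $u$ on $\partial B(0,1)$ with the doubling property that follows from the bounded frequency hypothesis. Take $\{\Phi_{m}\}_{m\geq 0}$ to be an $L^{2}(\partial B(0,1))$-orthonormal basis of Steklov eigenfunctions of $\mathcal{L}=\mathrm{div}(A\nabla\cdot)$, with eigenvalues $0=\sigma_{0}\leq\sigma_{1}\leq\cdots\to\infty$, and let $v_{m}$ denote the $\mathcal{L}$-harmonic extension of $\Phi_{m}$ to $B(0,1)$. Setting $a_{m}=\langle u,\Phi_{m}\rangle_{L^{2}(\partial B(0,1))}$, uniqueness for the Dirichlet problem gives the representation $u=\sum_{m}a_{m}v_{m}$ inside $B(0,1)$.

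First I would establish a reverse trace inequality
$$\sum_{m}a_{m}^{2}=\|u\|_{L^{2}(\partial B(0,1))}^{2}\leq C_{1}(\lambda,n,N,M)\,\|u\|_{L^{2}(B(0,1/4))}^{2}.$$
The hypothesis $\beta(0,r)\leq N$ on $(0,M)$ together with Almgren frequency monotonicity gives the growth bound $\|u\|_{L^{2}(\partial B(0,1))}^{2}\leq 8^{\,n-1+2N}\|u\|_{L^{2}(\partial B(0,1/8))}^{2}$; a standard trace inequality on $\partial B(0,1/8)$ followed by interior Caccioppoli inside $B(0,1/4)$ then converts the right-hand side into $\|u\|_{L^{2}(B(0,1/4))}^{2}$.

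Second I would prove an interior exponential decay for the Steklov modes,
$$\|v_{m}\|_{L^{2}(B(0,1/4))}\leq C_{2}\,e^{-c\sigma_{m}},$$
with $c,C_{2}$ depending only on $\lambda,n$. For $\mathcal{L}=\Delta$ this is explicit via $v_{m}=r^{d_{m}}Y_{m}$ with $\sigma_{m}=d_{m}$; for $A\in\mathcal{A}_{2}$ I would iterate Caccioppoli on geometric shells, using the comparability $\|\Phi_{m}\|_{H^{1/2}(\partial B(0,1))}^{2}\asymp\sigma_{m}$ and interior elliptic regularity to gain a fixed multiplicative factor per shell, accumulating into exponential decay in $\sigma_{m}$.

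With these ingredients, I split $u=\sum_{m\leq T}a_{m}v_{m}+\sum_{m>T}a_{m}v_{m}$. The head is bounded by $K_{T}\bigl(\sum_{m\leq T}a_{m}^{2}\bigr)^{1/2}$ for some $K_{T}=K_{T}(\lambda,n)$, while the tail, via Cauchy--Schwarz together with the two displays above, is bounded by $C_{2}\sqrt{C_{1}}\,e^{-c\sigma_{T}}\|u\|_{L^{2}(B(0,1/4))}$. Choosing $T=T(\lambda,n,N,M)$ large enough that $C_{2}\sqrt{C_{1}}\,e^{-c\sigma_{T}}\leq 1/2$ absorbs the tail into the left-hand side and yields the claimed estimate with $C=4K_{T}^{2}$. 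The main obstacle will be the second ingredient: proving quantitative exponential decay of $v_{m}$ in $\sigma_{m}$ uniformly in $A\in\mathcal{A}_{2}$, since separation of variables is unavailable in the $C^{1}$ variable-coefficient setting and one must substitute either a careful Caccioppoli iteration or a parametrix/perturbation argument controlled by Weyl-type asymptotics for the Steklov spectrum of $\mathcal{L}$.
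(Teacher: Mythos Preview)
Your $\Phi_m$ are Steklov eigenfunctions of $\mathcal{L}=\mathrm{div}(A\nabla\cdot)$, but in the paper $H_m$ is the space of degree-$m$ spherical harmonics and $\Phi_m$ denotes the orthogonal projection of $u|_{\partial B(0,1)}$ onto $H_m$; in particular the $\Phi_m$ depend on $u$ and make no reference to $A$. With that reading the paper's argument is much shorter than yours. One solves the \emph{Laplacian} Dirichlet problem $\Delta v=0$ in $B(0,1)$, $v=u$ on $\partial B(0,1)$, and uses Dirichlet's principle to get $\int_{B(0,1)}|\nabla v|^2\leq\int_{B(0,1)}|\nabla u|^2$; uniform ellipticity together with $\beta(0,1)\leq N$ gives $\int_{B(0,1)}|\nabla u|^2\leq N_0\int_{\partial B(0,1)}|u|^2$. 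Since $v$ is $\Delta$-harmonic, the explicit spherical-harmonic expansion yields $\int_{B(0,1)}|\nabla v|^2=\sum_d d\,a_d^2$ and $\int_{\partial B(0,1)}|u|^2=\sum_d a_d^2$, so the chain forces $\sum_d d\,a_d^2\leq N_0\sum_d a_d^2$ and hence $\sum_{d>N_0}a_d^2\leq C\sum_{d\leq N_0}a_d^2$. A growth bound $\int_{B(0,1/4)}|u|^2\leq C\int_{\partial B(0,1)}|u|^2$ from the frequency hypothesis then finishes the proof with $T=N_0$.

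Your Steklov route may be completable, but the obstacle you flag is genuine and your sketched fix does not close it. Almost monotonicity of the frequency for $v_m$ gives only the \emph{upper} bound $\beta_{v_m}(0,r)\leq e^{C(1-r)}\beta_{v_m}(0,1)\asymp\sigma_m$ for $r<1$; the interior decay $\|v_m\|_{L^2(B(0,1/4))}\lesssim e^{-c\sigma_m}$ requires instead a \emph{lower} bound $\beta_{v_m}(0,r)\gtrsim\sigma_m$ on $[1/4,1]$, and neither monotonicity nor a bare Caccioppoli iteration on shells produces one. Exponential interior decay of Steklov eigenfunctions is known for smooth data via pseudodifferential analysis of the Dirichlet-to-Neumann map, but reproducing it uniformly over $A\in\mathcal{A}_2$ would be substantial extra work. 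The paper's device of comparing to the $\Delta$-harmonic extension bypasses all of this: it trades variable-coefficient spectral theory for a single application of Dirichlet's principle plus classical identities on the sphere, at the cost only of the ellipticity constant.
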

Here $\beta(x,r)$ is the frequency function introduced in the next section. The functions $\{\Phi_{m}\}$ are projections to space $H_{m}$ which we will introduce in Section 4. In Section 4, we will also explain why we choose this form.
\begin{remark}
    In Section 4 we prove that Theorem \ref{thm1.5} implies Conjecture 1.3.
\end{remark}
\begin{remark}
    Indeed, we can replace $\beta(0,r)\leq N$ with $\mathcal{H}^{n-1}(Z(u)\cap B(0,M))\leq N$ as in \cite{MR3739232}, Logunov claimed that Theorem \ref{Thm2.7} in our Section 2 is ture for $A\in \mathcal{A}_{2}$.
\end{remark}

Another natural question is how to consider the opposite of this conjecture. In fact, we have the following theorem.
\begin{theorem} \label{inverse theorem}
    Let $u$ be a solution to $\mathrm{div}(A\nabla u)=0$ in $B(0,2)$ with $A\in \mathcal{A}_{1}$. Assume that there exist  constants $N$ and $C$ such that
    \begin{equation} \label{1.5}
        \max_{B(0,2)}|u|\leq C\sum_{|\alpha|=0}^{N}|D^{\alpha}u(0)| 
    \end{equation}
    Then there exist positive constants $r_{0}$ and $K$ such that for any $s\leq r_{0}$,
    \begin{equation}
        \mathcal{H}^{n-1}(Z(u)\cap B(0,s))\leq Ks^{n-1},
    \end{equation}
where $r_{0}$ depends on $n,A$ and $K$ depends on $n,C,N$ and $A$.
\end{theorem}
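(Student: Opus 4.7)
The plan is to extract from \eqref{1.5} a fixed-scale doubling inequality, transfer this into a uniform frequency bound at all small scales, and then apply the nodal-volume upper bound for $A\in\mathcal{A}_{1}$ established in Section~2.

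Since $A\in\mathcal{A}_{1}$ has smooth coefficients, $u$ is $C^{\infty}$ in the interior of $B(0,2)$, and standard interior derivative estimates give
\[
\sum_{|\alpha|\leq N}|D^{\alpha}u(0)| \leq C(n,A,N)\,\max_{B(0,1)}|u|.
\]
Combining this with the hypothesis \eqref{1.5} yields the fixed-scale doubling bound
\[
\max_{B(0,2)}|u| \leq C_{1}\max_{B(0,1)}|u|,\qquad C_{1}=C_{1}(n,A,C,N).
\]
Passing from $L^{\infty}$-doubling to $L^{2}$-doubling and then to the frequency of Section~2, I will conclude that $\beta_{u}(0,r_{1})\leq N_{0}$ at some $r_{1}\in[1,2]$, with $N_{0}=N_{0}(n,A,C,N)$. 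The case where the right-hand side of \eqref{1.5} vanishes forces $u\equiv 0$ and is excluded from the conclusion.

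Next, I will invoke the almost-monotonicity of the frequency for $\mathrm{div}(A\nabla\cdot)$ with $A\in\mathcal{A}_{1}$, i.e.\ the Garofalo--Lin monotonicity formula with the usual exponential multiplier accounting for the variable coefficients, to propagate the single-scale bound down: there exists $r_{0}\in(0,1]$, depending only on $n$ and $A$, such that
\[
\beta_{u}(0,r) \leq N_{0}+C_{2} \qquad\text{for every } 0<r\leq r_{0}.
\]
With the frequency uniformly controlled below $r_{0}$, the nodal-volume upper bound of Section~2 (the Logunov-type polynomial bound, valid for $A\in\mathcal{A}_{1}$), applied at scale $2s$ and rescaled, gives for every $s\leq r_{0}$
\[
\mathcal{H}^{n-1}(Z(u)\cap B(0,s)) \leq C(n,A)\,\beta_{u}(0,2s)^{\gamma}\,s^{n-1} \leq K s^{n-1},
\]
with $K=K(n,A,C,N)$, which is the desired conclusion.

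The non-routine step is the second: passing from a single-scale doubling estimate to a uniform frequency bound on every scale in $(0,r_{0}]$. This relies on the (almost-)monotonicity of $\beta_{u}(0,\cdot)$ for smooth elliptic operators, which is classical but must be applied in precisely the form used in this paper. Once it is in hand, the correct $s^{n-1}$ scaling on the nodal set follows automatically from the homogeneity of the Section~2 estimate, so no further difficulty arises.
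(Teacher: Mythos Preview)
Your argument is correct and follows essentially the same three-step skeleton as the paper: (i) combine \eqref{1.5} with interior derivative estimates to get a fixed-scale doubling bound; (ii) transfer this to a uniform frequency bound $\beta_{u}(0,s)\leq N_{0}$ for all small $s$ via almost-monotonicity; (iii) invoke a nodal-volume upper bound in terms of the frequency to conclude. The only substantive difference is in the citations: the paper routes step~(ii) through the doubling index (Lemma~\ref{almost monotone of doubling index} and Lemma~\ref{frquency&doubling index}) rather than directly through Garofalo--Lin monotonicity of $\beta$, and for step~(iii) it invokes the Naber--Valtorta exponential bound $(C\beta)^{\beta}s^{n-1}$ from \cite{MR3688031}, not a polynomial Logunov-type bound. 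Note in particular that Section~2 of the paper contains only the \emph{lower} bound of nodal volume (Theorem~\ref{Thm2.7}); there is no upper bound stated there, so your pointer to ``Section~2'' for the last step is a misattribution. Since $\beta$ is uniformly bounded, either form of the upper bound yields the same $Ks^{n-1}$ conclusion, so this does not affect correctness.
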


As we mention before, it is also important to study the interaction between nodal sets and the function itself. Based on this conjecture, we can prove a type of Liouville theorem for elliptic PDEs in divergence form defined on the whole space with constant coefficients. It states that if the volume of nodal sets does not grow too fast, the solution is actually a polynomial. We will see why we cannot deal with general elliptic PDEs in our proof.
\begin{theorem} \label{thm1.7} 
    Let $u$ be a solution to $\mathrm{div}(A\nabla u) =0$ in $\mathbb{R}^{n}$ where $A$ be a constant positive definite matrix. Then there exist constants $K$ and $r_0$ such that
    \begin{equation}
        \mathcal{H}^{n-1}(Z(u)\cap B(0,r))\leq Kr^{n-1}\quad r>r_0
    \end{equation}
    if and only if $u$ is a polynomial.
\end{theorem}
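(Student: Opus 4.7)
The easy direction ($\Leftarrow$) is classical: for a nonzero polynomial $u$ of degree $d$, the Crofton formula applied to lines, combined with the fact that $u$ has at most $d$ zeros on any line, yields $\mathcal{H}^{n-1}(Z(u)\cap B(0,r))\leq C(n,d)r^{n-1}$ for every $r>0$.

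For the nontrivial direction ($\Rightarrow$) the plan is to reduce to harmonic functions and then exploit the scale-invariance of the Laplacian. Since $A$ is constant, symmetric, and positive definite, set $L=A^{1/2}$ and $v(y)=u(Ly)$; a direct chain-rule computation shows $\Delta v\equiv 0$ on $\mathbb{R}^{n}$. This linear change of variables sends balls to ellipsoids sandwiched between $B(0,cr)$ and $B(0,c'r)$ and distorts $(n-1)$-dimensional Hausdorff measure by a bounded factor, so the growth hypothesis passes to $v$ with new constants $K',r_0'$. It therefore suffices to treat the harmonic case $A=I$.

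The heart of the argument is to invoke Theorem \ref{thm1.3} at every large scale. For $r\geq r_0'$ define $u_r(x)=u(rx)$, which is harmonic on $B(0,1)$. The scaling of Hausdorff measure yields
\[
\mathcal{H}^{n-1}(Z(u_r)\cap B(0,1))=r^{-(n-1)}\mathcal{H}^{n-1}(Z(u)\cap B(0,r))\leq K'.
\]
Because the equation is invariant under dilation when $A=I$, Theorem \ref{thm1.3} applied to $u_r$ gives constants $C,N$ depending only on $n$ and $K'$ (crucially, independent of $r$) with
\[
\max_{B(0,1/2)}|u_r|\leq C\sum_{|\alpha|=0}^{N}|D^{\alpha}u_r(0)|=C\sum_{|\alpha|=0}^{N}r^{|\alpha|}|D^{\alpha}u(0)|.
\]
Unraveling $u_r$ produces $\max_{|x|\leq r/2}|u(x)|\leq C'(1+r^{N})$ for all $r\geq r_0'$, so $u$ has at most polynomial growth of degree $N$. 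The classical Liouville theorem for harmonic functions of polynomial growth then forces $u$ to be a polynomial of degree at most $N$, which transfers back to the original $u$ via the inverse change of variables.

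The main obstacle, and the reason the statement requires $A$ to be constant, is precisely this scale-invariance step. For a variable-coefficient operator $\mathrm{div}(A\nabla\cdot)$ with $A\in\mathcal{A}_{1}$, the rescaled function $u_r$ solves $\mathrm{div}(A(r\,\cdot)\nabla u_r)=0$, and the constants in Theorem \ref{thm1.3} depend on these rescaled coefficients and in general deteriorate as $r\to\infty$. Without a dilation-uniform version of Nadirashvili's conjecture, the Liouville scheme collapses, which is exactly the limitation alluded to in the statement.
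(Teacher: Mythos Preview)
Your argument is correct and follows essentially the same scheme as the paper: rescale by $r$, apply Theorem~\ref{thm1.3} with constants independent of $r$ (this is exactly where constancy of $A$ is used), deduce polynomial growth, and conclude. The only differences are cosmetic. First, you insert a preliminary change of variables $v(y)=u(A^{1/2}y)$ to reduce to the Laplacian; the paper skips this because Theorem~\ref{thm1.3} already applies to constant $A\in\mathcal{A}_1$, and $u_r(x)=u(rx)$ solves the \emph{same} equation $\mathrm{div}(A\nabla u_r)=0$, so the reduction is unnecessary (though harmless). Second, for the final step the paper writes out the gradient estimate explicitly to kill all $(N{+}1)$-th derivatives as $r\to\infty$, whereas you quote the polynomial-growth Liouville theorem; these are the same argument. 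Third, for the easy direction you use Crofton's formula, while the paper observes that a polynomial of degree $m$ has constant frequency $\beta(0,r)\equiv m$ and invokes the standard upper bound for nodal sets in terms of frequency; both are classical and either works.
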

Motivated by the work of F. Lin and Z. Lin, we prove a similar theorem as Theorem 1.3. in \cite{MR4514975}. By  Theorem \ref{thm1.3}, we prove the following theorem. We call it nodal sets comparsion theorem:
\begin{corollary} \label{cor1.8}
    Assume $\mathrm{div}(A\nabla u)=\mathrm{div}(\bar{A}\nabla v)=0$ in $\mathbb{R}^{n}$ with $A,\bar{A}$ are two constant positive definite matrices and $Z(v)\subset Z(u)$. If $u$ is a polynomial, then $v$ is also a polynomial.
\end{corollary}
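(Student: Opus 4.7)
The plan is to combine the algebraic nature of $u$ with the Liouville-type statement of Theorem \ref{thm1.7}. Since $v$ satisfies $\mathrm{div}(\bar A\nabla v)=0$ on all of $\mathbb{R}^{n}$ with $\bar A$ constant positive-definite, Theorem \ref{thm1.7} reduces the corollary to producing a polynomial-in-$r$ bound for $\mathcal{H}^{n-1}(Z(v)\cap B(0,r))$ at sufficiently large radii. If $u\equiv 0$ then $Z(v)\subset Z(u)=\mathbb{R}^{n}$ is vacuous and there is nothing to extract, so I would assume $u\not\equiv 0$ throughout.

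First I would observe that, since $u$ is a nonzero polynomial of some degree $d$, the nodal set $Z(u)$ is a real algebraic hypersurface. A classical integral-geometric argument, such as the Cauchy--Crofton formula together with the fact that a generic affine line meets a real algebraic hypersurface of degree $d$ in at most $d$ points, gives the global volume bound
\begin{equation*}
\mathcal{H}^{n-1}\bigl(Z(u)\cap B(0,r)\bigr)\leq C(n,d)\,r^{n-1}\qquad\text{for all } r>0.
\end{equation*}
Using the inclusion $Z(v)\subset Z(u)$ and monotonicity of Hausdorff measure, this bound transfers verbatim to $v$:
\begin{equation*}
\mathcal{H}^{n-1}\bigl(Z(v)\cap B(0,r)\bigr)\leq C(n,d)\,r^{n-1}\qquad\text{for all } r>0.
\end{equation*}

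With this nodal growth estimate, I would apply Theorem \ref{thm1.7} directly to $v$: taking $K=C(n,d)$ and any $r_{0}>0$ in its hypothesis, the theorem concludes that $v$ is itself a polynomial, which is exactly the desired statement.

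The only step that does any real work is the algebraic volume bound for $Z(u)$; the rest is bookkeeping, so the main (mild) obstacle is to cite or reprove the Cauchy--Crofton/Milnor-type estimate in a form compatible with the hypothesis of Theorem \ref{thm1.7}. It is worth emphasizing that the constant-coefficient assumption is used in a single but essential place, namely through Theorem \ref{thm1.7}, which explains the limitation already flagged in the paper: for variable $A,\bar A$ there is no analogous Liouville statement to invoke.
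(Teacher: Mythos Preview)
Your argument is correct and follows the same three-step skeleton as the paper: bound $\mathcal{H}^{n-1}(Z(u)\cap B(0,r))$ by $K r^{n-1}$, transfer this to $Z(v)$ via the inclusion $Z(v)\subset Z(u)$, and invoke Theorem~\ref{thm1.7} to conclude that $v$ is a polynomial. The one substantive difference is in the first step. The paper stays entirely within its own framework and simply applies the ``$\Longleftarrow$'' direction of Theorem~\ref{thm1.7} to $u$ (polynomial $\Rightarrow$ $\beta_u(0,r)\equiv\deg u$ $\Rightarrow$ nodal bound via the classical upper estimate in terms of frequency). You instead reach the same bound through a Cauchy--Crofton/Milnor-type argument exploiting that $Z(u)$ is a degree-$d$ real algebraic set. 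Your route is more elementary in that it does not touch the PDE structure of $u$ at all, but it imports an integral-geometric fact from outside the paper; the paper's route keeps everything self-contained and makes visible that Theorem~\ref{thm1.7} is being used as a genuine ``if and only if''. Either way the proof is two lines once the bound on $Z(u)$ is in hand, and your explicit handling of the degenerate case $u\equiv 0$ is a nice touch that the paper leaves implicit.
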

This corollary has a different form, which is Theorem 1.3. in \cite{MR4514975}. It can prove $u=cv$ for some $c\in\ \mathbb{R}\setminus \{0\}$. But in our version, we only need the condition $Z(v)\subset Z(u)$. Also, we do not need to consider the ratio $u/v$ and the boundary Harnack principle. It is a pity that we cannot deal with more general elliptic PDEs for similar reasons.

Motivated by the above work, we can also give a new proof of Theorem 1.2. in \cite{MR4514975}. Our proof does not rely on the boundary Harnack inequality, but we can only deal with smooth coefficients.
\begin{theorem} \label{thm1.9}
    Suppose that $u$ and $v$ are solutions to $\mathrm{div}(A\nabla u)=0$ and $\mathrm{div}(A'\nabla v)=0$ in $B(0,10)\subset \mathbb{R}^{n}$ with $A,A'\in \mathcal{A}_{1}$. Also, assume that $Z(v)\subset Z(u)$ and $\beta_{u}(0,10)\leq N_{0}<\infty$. Then there exists a constant $D_{0}$ depending only on $n,A,A'$ and $N_{0}$ only such that $\beta_{v}(0,1)\leq D_{0}.$
\end{theorem}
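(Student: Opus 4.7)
The plan is to use Theorem~\ref{thm1.3} as the bridge between $u$ and $v$: the hypothesis $\beta_u(0,10)\leq N_0$ forces the nodal volume of $u$ to be bounded; the inclusion $Z(v)\subset Z(u)$ transfers this to $v$; Theorem~\ref{thm1.3} then essentially reduces $v$ on $B(0,2)$ to its first $N$ derivatives at the origin, which in turn bounds the doubling index of $v$ and therefore its frequency.

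First, because $A\in\mathcal{A}_{1}$ and $\beta_{u}(0,10)\leq N_{0}$, an almost monotonicity argument for the Almgren-type frequency gives $\beta_{u}(0,r)\leq N_{1}=N_{1}(n,A,N_{0})$ for all $r\in(0,9)$. The polynomial-in-frequency upper bound of Logunov \cite{MR3739232} (valid for smooth coefficients) then produces
\[
    \mathcal{H}^{n-1}(Z(u)\cap B(0,4))\leq K=K(n,A,N_{0}),
\]
and by $Z(v)\subset Z(u)$ the same bound $K$ holds for $v$ on $B(0,4)$. Next I apply Theorem~\ref{thm1.3} to $v$ after rescaling: the dilation $\tilde{v}(x)=v(4x)$ satisfies $\mathrm{div}(\tilde{A}\nabla\tilde{v})=0$ with $\tilde{A}(x)=A'(4x)\in\mathcal{A}_{1}$ and has nodal volume bounded by $4^{-(n-1)}K$ in $B(0,1)$. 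Undoing the rescaling yields
\[
    \max_{B(0,2)}|v|\leq C_{1}\sum_{|\alpha|=0}^{N}|D^{\alpha}v(0)|
\]
for constants $C_{1},N$ depending only on $n,A',N_{0}$.

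Assuming $v\not\equiv 0$ (otherwise the conclusion is vacuous), I normalize so that $\max_{B(0,1/2)}|v|=1$. Interior Schauder/elliptic estimates for the smooth-coefficient operator give $|D^{\alpha}v(0)|\leq C(|\alpha|,A')$ for every $|\alpha|\leq N$, so the right-hand side is bounded by some $C_{2}=C_{2}(n,A',N_{0})$. Combined with the trivial inequality $\max_{B(0,1)}|v|\geq \max_{B(0,1/2)}|v|=1$, this yields the doubling estimate
\[
    \frac{\max_{B(0,2)}|v|}{\max_{B(0,1)}|v|}\leq C_{2},
\]
which, via the standard equivalence between the logarithmic doubling index and Almgren's frequency for $A'\in\mathcal{A}_{1}$, translates into $\beta_{v}(0,1)\leq D_{0}(n,A,A',N_{0})$, as desired.

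The main obstacle is quantifying the first step: extracting an explicit nodal volume bound for $u$ from $\beta_{u}(0,10)\leq N_{0}$ with dependence only on $(n,A,N_{0})$. This relies on combining almost monotonicity of the frequency with Logunov's polynomial upper bound in the smooth-coefficient setting, and it is exactly this ingredient that restricts the argument to $A,A'\in\mathcal{A}_{1}$; one cannot replace Theorem~\ref{thm1.3} by the weaker Theorem~\ref{thm1.5} here because the derivative-sum right-hand side is essential for the doubling-to-frequency conversion. Once this step is in place, the rest of the proof is a direct concatenation of Theorem~\ref{thm1.3} with standard elliptic regularity.
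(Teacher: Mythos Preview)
Your argument is correct and follows essentially the same route as the paper's proof: bound $\mathcal{H}^{n-1}(Z(u))$ from $\beta_u(0,10)\leq N_0$, transfer this to $v$ via $Z(v)\subset Z(u)$, apply Theorem~\ref{thm1.3} (after rescaling) to get $\sup_{B(0,2)}|v|\leq C\sum_{|\alpha|\le N}|D^\alpha v(0)|$, then use interior elliptic estimates to convert the derivative sum into $\sup_{B(0,1)}|v|$ and read off a doubling/frequency bound. The only slip is the citation for the polynomial upper bound on nodal volume, which is \cite{MR3739231} rather than \cite{MR3739232}.
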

This theorem can be viewed as a local compactness property for a large class of solutions to such elliptic equations (see \cite{MR833393}).\newline

\textbf{Notations.} We use the following notations in this article.
\begin{itemize}
    \item Throughout this article, we denote positive constants by $C$. We may write $C(a_{1},a_{2},\cdots)$ to highlight its dependence on parameters $a_{1},a_{2},\cdots$. The value of $C$ may vary from line to line.
    \item In this article, we denote the ball centered at $x$ with radius $r$ by $B(x,r)=\{y\in \mathbb{R}^{n}:|x-y|\leq r\}$. We will write $B^{n}(x,r)$ if we want to emphasize that the ball belongs to $n$ dimensional space $\mathbb{R}^{n}$.
    \item We denote the $n$-dimensional Hausdorff measure by $\mathcal{H}^{n}$.
    \item For a given function $u$, we denote its nodal set by $Z(u)=\{x:u(x)=0\}$ and denote its singular set by 
$S(u)=\{x:|u(x)|=|\nabla u(x)|=0\}$.
\end{itemize}
\indent \textbf{Acknowledgements.} Jiahuan Li was supported by National Natural Science Foundation of China [grant number 12141105]. Junyuan Wang was partly supported by National Key Research and Development Program of China (No. 2022YFA1005501).

\section{Preliminary}

\subsection{Doubling Index and Frequency Function}

Doubling index and frequency function are two important tools in the study of nodal sets. In this subsection, we will give some definitions and several important theorems. Most of them can be found in the lecture notes by Logunov and Malinnikova\cite{MR4249624}. In this section, we always assume $A\in \mathcal{A}_{1}$.

\begin{definition}
    Let $u$ be a solution to $\mathrm{div}(A\nabla u)=0$ in $B(0,1)$, we can define the doubling index of $u$ as 
    \begin{equation}
        N(x,r):=\log_{2}\frac{\sup_{B(x,2r)}|u|}{\sup_{B(x,r)}|u|}
    \end{equation}
    with $B(x,2r)\subset B(0,1)$. Sometimes, we will write $N(B)=N(x,r)$ if $B=B(x,r)$. Also we will simply write $N(r)$ if $x$ is fixed. If we want to specify the choice of function, we will write $N_{u}(x,r)$.
\end{definition}

The definition is well-defined by the unique continuation property proved by Garofalo and Lin in \cite{MR833393}. There are many important properties and theorems of doubling index. The first one is the almost monotonicity of doubling index.
\begin{lemma}[\cite{MR4249624}]\label{almost monotone of doubling index}
    Let $u$ be a solution to $\mathrm{div}(A\nabla u)=0$ in $B(0,1)$ and let $N(r)=N(0,r)$. Then there exists a constant $C$ depending on $n$ and $A$ such that 
    \begin{equation}
        N(r)\leq CN(R)+C
    \end{equation}
    for all $4r<R<1$.
\end{lemma}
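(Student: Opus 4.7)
The plan is to reduce the claim to the (almost) monotonicity of the Almgren--Garofalo--Lin frequency function, which for smooth-coefficient divergence-form operators is classical, and then to transfer this monotonicity to the sup-based doubling index via Moser-type elliptic estimates.

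First I would set up the frequency. Define
\[H(r)=\int_{\partial B(0,r)}\mu\,u^{2}\,dS,\qquad D(r)=\int_{B(0,r)}\langle A\nabla u,\nabla u\rangle\,dy,\qquad \beta(r)=\frac{rD(r)}{H(r)},\]
where $\mu$ is the standard Garofalo--Lin weight built from $A$, comparable to $1$ with constants depending only on the ellipticity of $A$. A Rellich--Pohozaev computation, combined with the equation and a Caccioppoli bound, yields a constant $C=C(n,A)$ such that $r\mapsto e^{Cr}\beta(r)$ is non-decreasing on $(0,1)$; in particular
\[\beta(\rho)\le e^{C}\beta(\rho')\qquad\text{whenever }0<\rho\le\rho'<1.\]

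Next I would compare $\beta$ to $N$. Differentiating $H$ and substituting the equation gives $(\log H)'(r)=2\beta(r)/r+O(1)$; integrating on $[r,2r]$ yields $\log(H(2r)/H(r))=2(\log 2)\,\beta(\xi_{r})+O(r)$ for some $\xi_{r}\in[r,2r]$. Moser's $L^{\infty}$--$L^{2}$ estimate together with a trace inequality shows that $\sup_{B(0,\rho)}|u|$ is comparable to $\rho^{-(n-1)/2}H(\rho)^{1/2}$ up to a multiplicative factor $2^{C(n,A)}$. Combining the two, there exist constants $c_{1},c_{2},C'$ depending only on $n,A$ with
\[c_{1}\beta(r/2)-C'\ \le\ N(r)\ \le\ c_{2}\beta(2r)+C'.\]

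Chaining the two ingredients, for every $4r\le R<1$,
\[N(r)\ \le\ c_{2}\beta(2r)+C'\ \le\ c_{2}e^{C}\beta(R/2)+C'\ \le\ \frac{c_{2}e^{C}}{c_{1}}\bigl(N(R)+C'\bigr)+C'\ \le\ \widetilde{C}N(R)+\widetilde{C},\]
which is exactly the stated inequality, with $\widetilde{C}$ depending only on $n$ and $A$.

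The main technical difficulty will be the bookkeeping required to pass from the weighted sphere integrals $H(r)$ to the unweighted ball sup norms while absorbing all dependences into constants of the prescribed form; in particular one must track the $O(r)$ error from the non-constant coefficients in both the frequency monotonicity and in $(\log H)'$, and interchange sphere and ball scales $(\rho,\rho/2,2\rho)$ cleanly when going from $\beta$ to $N$ and back. The Garofalo--Lin monotonicity and Moser iteration themselves are classical and, once set up, do not require new ideas.
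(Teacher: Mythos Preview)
Your proposal is correct and is precisely the standard route. The paper does not supply its own proof of this lemma---it simply cites Logunov--Malinnikova---but the two ingredients you use are exactly the ones the paper itself records nearby: the Garofalo--Lin almost-monotonicity of $e^{Cr}\beta(r)$ (their Theorem~2.5) and the two-sided comparison between $N$ and $\beta$ (their Lemma~2.6, stated there as $C_{1}^{-1}\beta(x,r)-C_{2}\le N(x,r)\le C_{1}\beta(x,4r)+C_{2}$). Your chaining $N(r)\le c_{2}\beta(2r)+C'\le c_{2}e^{C}\beta(R/2)+C'\le \widetilde C N(R)+\widetilde C$ is fine once $4r\le R$, and the scale bookkeeping you flag (sphere vs.\ ball, $r/2$ vs.\ $2r$ vs.\ $4r$) is routine.
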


Besides, we can compare doubling index at two different point. The proof is based on the definition and covering argument. As a result, this theorem actually works for all function with well-defined doubling index:
\begin{lemma}\label{changed center estimate}
    Let $u$ be a solution to $\mathrm{div}(A\nabla u)=0$ in $B(0,1)$. Then the following inequality holds
    \begin{equation} \label{2.2}
        N(0,1/16)\leq N(x,1/8)+N(x,1/16)+N(x,1/32) 
    \end{equation}
    for all $x\in B(0,1/32)$.
\end{lemma}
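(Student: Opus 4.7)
The plan is to exploit the telescoping structure of the right-hand side. Writing out the definition, one has
\begin{equation*}
N(x,1/8) + N(x,1/16) + N(x,1/32) \;=\; \log_{2}\frac{\sup_{B(x,1/4)}|u|}{\sup_{B(x,1/8)}|u|} + \log_{2}\frac{\sup_{B(x,1/8)}|u|}{\sup_{B(x,1/16)}|u|} + \log_{2}\frac{\sup_{B(x,1/16)}|u|}{\sup_{B(x,1/32)}|u|},
\end{equation*}
and the middle $\sup$-terms cancel, leaving the single ratio $\log_{2}\bigl(\sup_{B(x,1/4)}|u|/\sup_{B(x,1/32)}|u|\bigr)$. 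So the task reduces to comparing this ratio with $\log_{2}\bigl(\sup_{B(0,1/8)}|u|/\sup_{B(0,1/16)}|u|\bigr)$.

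The comparison is then done by elementary ball inclusions obtained from the triangle inequality, valid whenever $x\in B(0,1/32)$. First I would note $B(0,1/8)\subset B(x,1/8+1/32)=B(x,5/32)\subset B(x,1/4)$, which gives $\sup_{B(0,1/8)}|u|\leq \sup_{B(x,1/4)}|u|$ and therefore makes the numerator of the desired ratio no larger. Similarly $B(x,1/32)\subset B(0,1/32+1/32)=B(0,1/16)$, which yields $\sup_{B(x,1/32)}|u|\leq \sup_{B(0,1/16)}|u|$ and makes the denominator on the right at least the one on the left. Combining these inequalities inside the logarithm, the monotonicity of $\log_{2}$ closes the estimate.

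I do not anticipate any real obstacle here; the lemma is essentially bookkeeping about radii and is not deep (notice nothing about the equation $\mathrm{div}(A\nabla u)=0$ is used except that $\sup|u|>0$ on any ball, by unique continuation, so the logs are well defined). The only point that requires mild care is to verify the two radius inequalities $1/8+1/32\leq 1/4$ and $1/32+1/32\leq 1/16$; both are immediate in thirty-seconds. This is why, as the authors remark, the inequality holds for any function whose doubling index is well defined, not only for solutions of the PDE.
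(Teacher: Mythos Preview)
Your proof is correct and follows essentially the same route as the paper: telescoping the right-hand side into the single ratio $\log_{2}\bigl(\sup_{B(x,1/4)}|u|/\sup_{B(x,1/32)}|u|\bigr)$ and then using the two ball inclusions $B(0,1/8)\subset B(x,1/4)$ and $B(x,1/32)\subset B(0,1/16)$ to bound $N(0,1/16)$ from above. The paper presents the argument starting from the left-hand side rather than the right, but the content is identical.
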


\begin{proof}
    Be the definition, we have
    \begin{equation}
        N(0,1/16)=\log_{2}\frac{\sup_{B(0,1/8)}|u|}{\sup_{B(0,1/16)}|u|}
    \end{equation}
    By the choice of $x$, we have $B(0,1/8)\subset B(x,1/4),B(x,1/32)\subset B(0,1/16)$. As a result
    \begin{align}
 \begin{split}       N(0,1/16)&=\log_{2}\frac{\sup_{B(0,1/8)}|u|}{\sup_{B(0,1/16)}|u|}\\
        &\leq \log_{2}\frac{\sup_{B(x,1/4)}|u|}{\sup_{B(x,1/32)}|u|}\\
        &=N(x,1/8)+N(x,1/16)+N(x,1/32).
        \end{split}
    \end{align}
\end{proof}

We can actually prove $N(x,r)\leq CN(y,R)+C$ if $B(x,2r)\subset B(x,R)$ for some constant $C$ depending on $A,n$ only. But the above version is sufficient to prove our results. Now we will introduce the frequency function. Frquency function was first introduced by Almgren in \cite{almgren1979dirichlet} to study the minimal surface. It was then used to study the unique continuation property and nodal sets of elliptic PDEs. See \cite{MR833393} and \cite{MR1090434} for some applications.

\begin{definition}\label{frequency}
    Let $u$ be a solution to $\mathrm{div}(A\nabla u)=0$ in $B(0,1)$. After chosing a new coordinate, we can assume $A(0)=I$. Then let
    \begin{equation}
        \mu(x)=\frac{(A(x)x,x)}{|x|^{2}}
    \end{equation}
    We can define height function and Dirichlet energy as the following
    \begin{equation}
        h(x,r):=r^{1-n}\int_{\partial B(x,r)}\mu |u|^{2}\mathrm{dS}\quad D(x,r):=r^{1-n}\int_{ B(x,r)}(A\nabla u,\nabla u)\mathrm{dy}
    \end{equation}
    whenever $B(x,r)\subset B(0,1)$.
    The \textit{frequency function} is defined by
    \begin{equation}
        \beta(x,r):=\frac{rD(x,r)}{h(x,r)}
    \end{equation}
    Sometimes, we will write $\beta(r)$ if $x$ is fixed. If we want to specify the choice of function, we will write $\beta_{u}(x,r)$.
\end{definition}
The most improtant property of frequency function is the almost monotonicity property. The proof can be found in the article of Garofalo and Lin :
\begin{theorem}[\cite{MR833393} Theorem 2.1]
    For $u$ as above, there exists a constant $C$ only depening on $n$ and $A$ such that the function $\bar\beta(r)=e^{Cr}\beta(r)$ is non-decreasing.
\end{theorem}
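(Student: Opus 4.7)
The plan is to establish the almost-monotonicity by computing the logarithmic derivative of $\beta(r)$ and showing that the Cauchy–Schwarz inequality forces it to be bounded below by $-C$, with the error coming purely from the non-constancy of $A$.

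The first step is to convert the Dirichlet energy into a boundary integral. Since $\mathrm{div}(A\nabla u)=0$ in $B(x,r)$, integration by parts yields
\begin{equation}
\int_{B(x,r)}(A\nabla u,\nabla u)\,dy \;=\; \int_{\partial B(x,r)} u\,(A\nabla u,\nu)\,dS,
\end{equation}
so $D(x,r)=r^{1-n}\int_{\partial B(x,r)}u\,(A\nabla u,\nu)\,dS$. Next I would differentiate $h(x,r)$ in $r$ using the substitution $y=x+rz$ with $|z|=1$; after accounting for the factor $r^{1-n}$, the derivative is
\begin{equation}
h'(r) \;=\; \frac{2}{r}D(r) \;+\; O(1)\,h(r),
\end{equation}
where the $O(1)$ error absorbs the contributions from $\partial_r\mu$ and from replacing $A\nabla u\cdot\nu$ by $\mu\,\partial_\nu u$ (both terms are controlled because $A\in\mathcal A_1$ implies $|A(x)-I|\le C|x|$ and $|\nabla A|$ is bounded on $\overline{B(0,1)}$).

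The heart of the proof is the Pohozaev-type identity obtained by multiplying $\mathrm{div}(A\nabla u)=0$ by the vector-field multiplier $(A(y-x),\nabla u)$ and integrating over $B(x,r)$. After integrating by parts this produces a boundary term $r^{1-n}\int_{\partial B(x,r)}\frac{(A\nabla u,\nu)^2}{\mu}\,dS$ together with a bulk term comparable to $D(r)$, modulo lower-order errors controlled by $\|\nabla A\|_\infty$. Combining this with the derivative formula for $D$ gives
\begin{equation}
r D'(r) \;=\; (2-n)D(r) \;+\; 2r\,I(r) \;+\; O(1)\,r\,D(r),
\qquad I(r):=r^{1-n}\!\!\int_{\partial B(x,r)}\!\!\frac{(A\nabla u,\nu)^2}{\mu}\,dS.
\end{equation}

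Taking $(\log\beta)'=\frac{1}{r}+\frac{D'}{D}-\frac{h'}{h}$ and inserting the two formulas above, the $\frac{1}{r}$ and $(2-n)/r$ pieces cancel and one is left with
\begin{equation}
\frac{\beta'(r)}{\beta(r)} \;=\; 2\left(\frac{I(r)}{D(r)}-\frac{D(r)}{h(r)}\right) \;+\; O(1).
\end{equation}
The parenthetical term is non-negative: applying Cauchy–Schwarz to $D(r)=r^{1-n}\int_{\partial B(x,r)}u\,(A\nabla u,\nu)\,dS$ with the weight $\mu$ gives $D(r)^2\le h(r)\,I(r)$, hence $I(r)/D(r)\ge D(r)/h(r)$. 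Therefore $\beta'(r)\ge -C\beta(r)$, which integrates to $(e^{Cr}\beta(r))'\ge 0$, proving the claim.

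The main obstacle is bookkeeping the error terms generated by $A$ not being the identity and $\mu$ not being constant: every integration by parts or differentiation in $r$ spits out terms involving $\partial_j a^{ij}$ or $\nabla\mu$, and one must show that all of them are absorbed into the additive $O(1)$ in the final inequality (as opposed to, say, $O(\beta(r))$ with unfavorable sign). This is exactly where the smoothness of $A$ and the normalization $A(0)=I$ (which forces $|A(y)-I|=O(|y|)$ on $B(x,r)$) are used; with these, a straightforward uniform bound on first derivatives of $A$ suffices to close the argument.
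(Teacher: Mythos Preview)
The paper does not supply its own proof of this theorem; it is quoted verbatim from Garofalo--Lin and the reader is referred there. Your outline is exactly the Garofalo--Lin strategy (boundary representation of $D$, differentiation of $h$ and $D$, Rellich identity, Cauchy--Schwarz), so in that sense there is nothing to compare against.

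That said, two of your intermediate identities are miswritten, and as stated they do not combine to give your (correct) final line. For $A=I$ and $u$ harmonic one has exactly
\[
h'(r)=2D(r),\qquad rD'(r)=-D(r)+2r\,I(r),
\]
not $h'(r)=\tfrac{2}{r}D(r)$ and $rD'(r)=(2-n)D(r)+2rI(r)$. With the correct constants the computation $(\log\beta)'=\tfrac{1}{r}+\tfrac{D'}{D}-\tfrac{h'}{h}=2\bigl(\tfrac{I}{D}-\tfrac{D}{h}\bigr)$ closes on the nose, and the variable-coefficient case then adds only the $O(1)$ errors. With your stated formulas the $1/r$ terms do not cancel and an uncontrolled $(3-n)/r$ survives.

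A second, subtler point: the claim that ``replacing $(A\nabla u,\nu)$ by $\mu\,\partial_\nu u$'' in the $h'$ computation costs only $O(1)h(r)$ is not justified by the crude bound $|A-I|=O(r)$, because the discrepancy involves $\int_{\partial B_r}|u|\,|\nabla u|$, which is not dominated by $h(r)$ alone. Garofalo--Lin avoid this by writing $H(r)=\int_{\partial B_r}\mu u^2\,dS$ as a solid integral of $\mathrm{div}\bigl(u^2 A(y-x)\bigr)$ over $B_r$ and differentiating that; the divergence produces $2u(A\nabla u,\nabla|y-x|)\cdot\ldots$ directly, so the ``replacement'' never has to be made. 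Your last paragraph correctly flags error-bookkeeping as the main obstacle, but this particular step needs that specific device rather than a pointwise estimate.
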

The next question is whether doubling index and frequency function are comparable in some sense. The answer is yes since frequency function is some $L^{2}$ norm and doubling index is $L^{\infty}$ norm. We will state the following lemma:
\begin{lemma}[\cite{MR4249624}]\label{frquency&doubling index}
    There exists a constant $C\geq 1$ depending on $n$ and $A$ such that 
    \begin{equation} \label{2.3}
        C_{1}^{-1}\beta(x,r)-C_{2}\leq N(x,r)\leq C_{1}\beta(x,4r)+C_{2} 
    \end{equation}
    uniformly for all $0<r\leq 1/4$ and all $u$ be a solution to $\mathrm{div}(A\nabla u)=0$ in $B(x,2)$.
\end{lemma}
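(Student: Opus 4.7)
The plan is to transfer the $L^\infty$-based doubling index to the $L^2$-based height function $h(x,r)$, where the almost-monotonicity machinery developed above is directly applicable, and then bridge the two using elementary elliptic estimates. Throughout I would work after a linear change of coordinates so that $A(0)=I$, as is standard in the frequency framework.

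The first step would be to establish a sandwich of the form
\begin{equation}
c_1\, h(x,r) \;\leq\; \sup_{B(x,r)} |u|^2 \;\leq\; c_2\, h(x,\alpha r)
\end{equation}
for any fixed $\alpha>1$, with constants depending only on $n$, $A$, and $\alpha$. The lower inequality follows from the maximum principle (which gives $\sup_{B(x,r)}|u|=\sup_{\partial B(x,r)}|u|$), combined with the trivial bound that the spherical $L^\infty$ norm dominates the $L^2$ average on $\partial B(x,r)$; this last quantity is comparable to $h(x,r)$ because $\mu\asymp 1$ by uniform ellipticity. The upper inequality comes from the interior Moser estimate $\sup_{B(x,r)}|u|^2 \leq C(\alpha)\, r^{-n}\int_{B(x,\alpha r)}|u|^2$, combined with $\int_{B(x,\alpha r)}|u|^2 \leq C(\alpha r)^n h(x,\alpha r)$. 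The latter is obtained by writing the volume integral in polar coordinates and using that $h(x,s)/s^{n-1}$ is essentially non-decreasing in $s$, a direct consequence of the logarithmic derivative identity used in the proof of Theorem~2.5.

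Second, I would use the logarithmic derivative formula
\begin{equation}
\frac{d}{ds} \log h(x,s) \;=\; \frac{n-1}{s} + \frac{2\beta(x,s)}{s} + O(1),
\end{equation}
so that
\begin{equation}
\log \frac{h(x,r_2)}{h(x,r_1)} \;=\; (n-1)\log(r_2/r_1) + 2\int_{r_1}^{r_2}\frac{\beta(x,s)}{s}\,ds + O(r_2).
\end{equation}
The almost-monotonicity Theorem~2.5 then yields $\beta(x,s)\leq e^{Cr_2}\beta(x,r_2)$ from above and $\beta(x,s)\geq e^{-Cr_2}\beta(x,r_1)$ from below, for $s\in[r_1,r_2]$, and the integral $\int_{r_1}^{r_2}\beta(x,s)/s\,ds$ gets trapped between $c\beta(x,r_1)\log(r_2/r_1)$ and $C\beta(x,r_2)\log(r_2/r_1)$.

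For the upper bound of $N(x,r)$, I would combine the sandwich with $\alpha=2$:
\begin{equation}
2N(x,r) \;=\; \log_2 \frac{\sup_{B(x,2r)}|u|^2}{\sup_{B(x,r)}|u|^2} \;\leq\; \log_2 \frac{c_2\,h(x,4r)}{c_1\,h(x,r)},
\end{equation}
then use the upper bound on the $\beta$-integral over $[r,4r]$ to conclude $N(x,r)\leq C_1\beta(x,4r)+C_2$. For the lower bound, I would use the sandwich with a tighter enlargement, say $\alpha=3/2$, giving
\begin{equation}
2N(x,r) \;\geq\; \log_2 \frac{c_1\,h(x,2r)}{c_2\,h(x,3r/2)},
\end{equation}
and then bound the integral $\int_{3r/2}^{2r}\beta(x,s)/s\,ds$ from below in terms of $\beta(x,r)$ via almost monotonicity, obtaining $N(x,r)\geq C_1^{-1}\beta(x,r)-C_2$.

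The main subtlety I expect is the bookkeeping of radii: one must choose the Moser enlargement factor small enough that the right-hand radius $\alpha r$ in the upper bound does not exceed $4r$ (and analogously on the other side, so that the lower integral range still lies inside the ball where the monotonicity formula is valid). Once the enlargement factors are pinned down, the rest is a direct combination of Moser's estimate, the monotonicity formula, and Theorem~2.5, with constants absorbed into $C_1,C_2$.
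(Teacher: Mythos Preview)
The paper does not supply a proof of this lemma at all; it is simply quoted from the Logunov--Malinnikova lecture notes \cite{MR4249624}. Your proposal is precisely the standard argument from that reference, and it is correct in structure: sandwich $\sup_{B(x,r)}|u|^2$ between $h(x,r)$ and $h(x,\alpha r)$ via the maximum principle and Moser's inequality, then control $\log(h(x,r_2)/h(x,r_1))$ through the integral $\int_{r_1}^{r_2}\beta(x,s)\,ds/s$ using the almost monotonicity of $\beta$ from Theorem~2.5.

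One small correction: with the paper's normalization $h(x,r)=r^{1-n}\int_{\partial B(x,r)}\mu|u|^2$, the logarithmic derivative is
\[
\frac{d}{ds}\log h(x,s)=\frac{2\beta(x,s)}{s}+O(1),
\]
\emph{without} the $(n-1)/s$ term you wrote (that term appears only for the unnormalized $\int_{\partial B(x,s)}\mu|u|^2$). This is harmless for your argument, since the extra term would in any case integrate to a constant absorbed into $C_2$, but it is worth getting the normalization straight. Otherwise the bookkeeping of radii you describe is exactly what is needed, and the proof goes through.
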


\subsection{Nodal Sets and Doubling Index} 

In this subsection, we recall a theorem proved by Logunov in \cite{MR3739231}. A direct computation shows that the bounded nodal volume implies bounded doubling index.

\begin{theorem}[\cite{MR3739232} Remark 7.2]\label{Thm2.7}
    Let $u$ be a solution to $\mathrm{div}(A\nabla u)=0$ in $B(0,M)$ with $M$ is a constant large enough depending on $n$ and $A$. We also assume that $A \in \mathcal{A}_1$ and $u(0)=0$. Then we have the following estimate of the volume of the nodal set:
    \begin{equation} \label{2.4}
        \mathcal{H}^{n-1}(\{u=0\}\cap B(0,1))\geq 2^{c\log \beta(0,1/2)/\log\log \beta(0,1/2)} 
    \end{equation}
    for $\beta(0,1/2)\geq \beta_{0}$ and $c$ only depending on $n$ and $A$.
\end{theorem}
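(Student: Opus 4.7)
The plan is to reduce the problem to a quantitative statement about the doubling index via Lemma \ref{frquency&doubling index}, which gives $N(0,r) \gtrsim \beta(0,r)$ on comparable scales; it therefore suffices to show that $\mathcal{H}^{n-1}(Z(u) \cap B(0,1)) \geq 2^{cN/\log N}$ whenever the doubling index $N=N(0, r_0)$ is sufficiently large (for some fixed $r_0 \lesssim 1$). The starting point is that $u(0)=0$ combined with large doubling index forces $u$ to oscillate substantially on every scale below $1$: by the almost monotonicity (Lemma \ref{almost monotone of doubling index}) and the unique continuation pegged at $0$, there must be comparably many scales on which the zero set is forced to intersect the ball in a nontrivial way, and the task is to make this quantitative.

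The core of the argument is Logunov's combinatorial scheme on a dyadic grid. First I would fix a large integer $A$ (to be tuned) and subdivide a cube $Q$ of side $r$ into $A^n$ congruent subcubes $q$. Using the almost monotonicity of the doubling index and Lemma \ref{changed center estimate} (transferring doubling from the center $0$ to a nearby point), one shows that the maximal doubling index $N(q)$ among subcubes is not much smaller than $N(Q)$. The crucial \emph{simplex/hyperplane lemma} then asserts: among the $A^n$ subcubes, only a small fraction (like $A^{n-1}$) can have doubling index substantially smaller than $N(Q)$; any $A$-many subcubes all with small doubling index and arranged in a line would, by propagation of smallness applied across the corresponding hyperplane slab, force a contradiction with the global doubling $N(Q)$. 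Hence at each refinement step there are at least a polynomially large number of subcubes inheriting a comparable doubling index.

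Iterating this subdivision $k$ times produces a tree of "bad" cubes whose number of leaves grows like $(A^n - A^{n-1})^k$ while the side length decreases like $A^{-k}$. In each leaf, since doubling is large and (after choosing the tree so that $0$ lies in a leaf, propagated by Lemma \ref{changed center estimate}) the function still vanishes somewhere close by, one gets a quantitative lower bound on the nodal $(n-1)$-volume inside that leaf from a simple base case for functions with large doubling and a nearby zero. Summing these disjoint contributions and optimizing $A$ in terms of $N$—roughly $A \sim \log N$ to balance the polynomial loss per scale against the number of scales $k \sim N/\log N$—produces the superpolynomial bound $2^{c \log N / \log \log N}$.

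The main obstacle is unquestionably the hyperplane/simplex lemma: controlling how the doubling indices of neighbors in the grid constrain one another is a delicate quantitative propagation-of-smallness estimate, and it is exactly the ingredient that makes the argument work for variable-coefficient operators in $\mathcal{A}_1$ (rather than only for harmonic functions). A secondary technical difficulty is making the combinatorial bookkeeping robust to the "$+C$" errors in the almost-monotonicity of $\beta$ and $N$, so that the tree of bad cubes genuinely survives through $\sim N/\log N$ generations of refinement. Since the statement is quoted from \cite{MR3739232}, I would invoke the result as a black box in this paper rather than reproduce its proof.
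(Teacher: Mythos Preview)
The paper does not prove this theorem at all; it is stated with the citation \cite{MR3739232} (Remark 7.2) and used as a black box throughout. Your final sentence---invoking the result without reproducing its proof---matches the paper's treatment exactly, so your proposal is correct in that sense.

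As a side remark on your sketch of Logunov's argument: the direction of the key combinatorial lemma is slightly off. In the lower-bound paper \cite{MR3739232} the relevant statement is that if a cube has large doubling index $N$, then upon subdivision into $A^n$ subcubes, \emph{many} (at least a fixed fraction, not merely $A^n - A^{n-1}$) of them inherit doubling index at least $N/(1+c/\log A)$. The hyperplane/simplex mechanism you describe---that too many subcubes along a line with small doubling would contradict the global doubling via propagation of smallness---is closer in spirit to the \emph{upper}-bound paper \cite{MR3739231}, where one controls how many subcubes can have \emph{large} doubling. This does not affect the paper under review, since neither you nor the authors attempt a self-contained proof, but it is worth keeping straight if you cite the mechanism elsewhere.
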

We need to mention here that the right-hand side is increasing if we view it as a function of $\beta(r)$ and assume $\beta(r)$ is big enough. So if we assume the nodal volume is bounded above, we can prove that the doubling index and frequency function should also be bounded. But the theorem only works for solutions that vanish at 0. In the next theorem, we will prove that the doubling index is also bounded even if $u(0)\neq 0$.

\begin{theorem}\label{doubling bound}
    Let $u$ be a solution to $\mathrm{div}(A\nabla u)=0$ in $B(0,1)$ and $\mathcal{H}^{n-1}(Z(u))\leq K$. Also, we assume that $Z(u)\cap B(0,1/64)\neq \emptyset$. Then there exists a constant $C$ only depending on $n,A$ and $K$ such that
    \begin{equation}
        N(0,1/32)\leq C.
    \end{equation}
\end{theorem}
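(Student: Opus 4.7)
The plan is to use Theorem \ref{Thm2.7} as the central tool. That theorem gives an exponential lower bound on the nodal volume in terms of the frequency at a zero of $u$, so, inverting the exponential, our upper bound $K$ on the nodal volume will force an upper bound on the frequency, which converts (via Lemma \ref{frquency&doubling index}) to the bound on the doubling index that we want. The only catch is that Theorem \ref{Thm2.7} requires the solution to vanish at the center of the ball, which our $u$ need not do at $0$. We bypass this by running the argument centered at a zero $x_0 \in Z(u) \cap B(0, 1/64)$ and then transferring the conclusion back to $0$ via a change-of-center estimate in the spirit of Lemma \ref{changed center estimate}.

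First I would fix $x_0 \in Z(u) \cap B(0, 1/64)$ and rescale. Let $M$ be the constant of Theorem \ref{Thm2.7} and pick $\rho \leq (1-|x_0|)/M$; then $v(y) := u(x_0 + \rho y)$ solves $\operatorname{div}(\tilde A \nabla v) = 0$ on $B(0, M)$ with $\tilde A(y) := A(x_0 + \rho y) \in \mathcal{A}_1$ (the same ellipticity constant), and $v(0) = u(x_0) = 0$. By the scaling of Hausdorff measure,
\begin{equation*}
\mathcal{H}^{n-1}(Z(v) \cap B(0,1)) = \rho^{-(n-1)} \mathcal{H}^{n-1}(Z(u) \cap B(x_0, \rho)) \leq \rho^{-(n-1)} K.
\end{equation*}
Combining with the lower bound of Theorem \ref{Thm2.7}, using that $\beta \mapsto 2^{c\log\beta/\log\log\beta}$ is increasing in the large-$\beta$ regime and treating the case $\beta_v(0, 1/2) < \beta_0$ trivially, one obtains $\beta_v(0, 1/2) \leq C(n, A, K)$. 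By the scale invariance of the frequency, this is $\beta_u(x_0, \rho/2) \leq C(n, A, K)$.

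Next I would convert this frequency bound into bounds on $N_u(x_0, r)$ for $r \in \{1/16, 1/32, 1/64\}$. Taking $\rho$ as large as possible in the previous step ensures $\rho/2 \geq 1/4$, and then the almost monotonicity of $\beta$ gives $\beta_u(x_0, R) \leq C$ for all $R \leq 1/4$; applying Lemma \ref{frquency&doubling index} at $R = 4r$ bounds each of $N_u(x_0, 1/16)$, $N_u(x_0, 1/32)$, $N_u(x_0, 1/64)$. Finally, since $|x_0| \leq 1/64$, the inclusions $B(0, 1/16) \subset B(x_0, 1/8)$ and $B(x_0, 1/64) \subset B(0, 1/32)$ give, exactly as in Lemma \ref{changed center estimate},
\begin{equation*}
N_u(0, 1/32) \leq N_u(x_0, 1/16) + N_u(x_0, 1/32) + N_u(x_0, 1/64) \leq C(n, A, K).
\end{equation*}

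The hard part will be the scale matching. Almost monotonicity of $\beta$ propagates upper bounds only to smaller radii, so the single rescaled application of Theorem \ref{Thm2.7} must itself reach the largest scale we need, namely $\beta_u(x_0, 1/4)$. This forces $\rho \gtrsim 1/2$, and then $\rho M \leq 1 - |x_0|$ leaves only a narrow margin; the hypothesis $Z(u) \cap B(0, 1/64) \neq \emptyset$ is tailored to guarantee exactly this margin. If the admissible $M$ in Theorem \ref{Thm2.7} exceeds the threshold $2(1-|x_0|)$, one must instead chain applications of Theorem \ref{Thm2.7} across several intermediate scales, which is the principal technical obstacle in tightening the constants.
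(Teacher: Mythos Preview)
Your approach is essentially the paper's: pick a zero $x_0\in B(0,1/64)$, invoke Theorem~\ref{Thm2.7} there to bound $\beta_u(x_0,\cdot)$, convert to doubling-index bounds via Lemma~\ref{frquency&doubling index}, and transfer back to the origin by a rescaled Lemma~\ref{changed center estimate}; the paper merely splits off the trivial sub-case $u(0)=0$ whereas you treat both at once. Your concern about the size of $M$ is legitimate but the fix is not ``chaining'' applications of Theorem~\ref{Thm2.7}---the paper simply notes (forward-referencing Section~3.2) that one may take $M=1$ without loss of generality, which removes the scale-matching obstacle entirely.
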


\begin{remark}
    Indeed, the doubling index is also bound when $Z(u)\cap B(0,1/64)= \emptyset$. It is a direct result of the Harnack inequality. 
\end{remark}

\begin{proof}

    Case 1: If $u(0)=0$, then by \eqref{2.3}, \eqref{2.4} and the almost monotonicity of frequency function, we have
    \begin{equation}
        N(0,1/32)\leq C\beta(0,1/8)+C\leq C\beta(0,1/2)+C\leq C
    \end{equation}

    Case 2: If $u(0)\neq 0$, then by our assumption, there exists a zero $x\in B(0,1/64)$, then applying Theorem \ref{Thm2.7} to $u$ we have 
    \begin{equation}
        \beta(x,(1-|x|)/2)\leq C(n,A,K).
    \end{equation}
    Combining \eqref{2.2} and \eqref{2.3}, we have the following rescaling version of Lemma 2.3:
    \begin{equation}
        N(0,r)\leq C\beta(x,8r)+C
    \end{equation}
    for all $x\in B(0,r/2)$ and $r\leq 1/16$. Then we let $r=1/32$, we have
    \begin{equation}
        N(0,1/32)\leq C\beta(x,1/4)+C\leq C\beta(x,(1-|x|)/2)+C\leq C.
    \end{equation}
\end{proof}


\section{Nadirashvilli's Conjecture with Smooth Coefficient}

\subsection{Bound in a Smaller Ball}

\indent First, we will prove our main result in a much smaller ball. In this section, we always assume $A\in \mathcal{A}_{1}$, that is, the smooth, uniformly elliptic operator. By Section 3.2, without loss of generality, we may assume $M=1$ in Theorem \ref{Thm2.7}.

\begin{theorem} \label{them 2.8}
    Let $u$ be a solution to $\mathrm{div}(A\nabla u)=0$ in $B(0,1)$ and $\mathcal{H}^{n-1}(Z(u))\leq K$. Then there exist constants $C$ and $N$ only depending on $n,A$ and $K$ such that
    \begin{equation}
        \sup_{B(0,128)}|u|\leq C\sum_{|\alpha|=0}^{N}|D^{\alpha}u(0)|.
    \end{equation}
\end{theorem}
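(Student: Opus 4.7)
The plan is to argue by contradiction and compactness. Suppose the conclusion fails; then for each integer $k\geq 1$ there is a solution $u_k$ of $\mathrm{div}(A\nabla u_k)=0$ in $B(0,1)$ with $\mathcal{H}^{n-1}(Z(u_k))\leq K$ and
\[
\sup_{B(0,1/128)}|u_k| \;>\; k\sum_{|\alpha|=0}^{k}|D^{\alpha}u_k(0)|.
\]
Normalizing $v_k:=u_k/\sup_{B(0,1/128)}|u_k|$ gives $\sup_{B(0,1/128)}|v_k|=1$ together with $\sum_{|\alpha|\leq k}|D^{\alpha}v_k(0)|<1/k$, so in particular, for every fixed multi-index $\alpha$ one has $D^{\alpha}v_k(0)\to 0$ as $k\to\infty$.

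The first step is to extract uniform estimates on the family $\{v_k\}$. Since $Z(v_k)=Z(u_k)$, the nodal hypothesis is preserved, so Theorem \ref{doubling bound} yields $N_{v_k}(0,1/32)\leq C_0$ for a constant $C_0=C_0(n,A,K)$, the case $Z(v_k)\cap B(0,1/64)=\emptyset$ being handled by the Harnack inequality as noted in the remark after that theorem. Combined with the almost-monotonicity Lemma \ref{almost monotone of doubling index}, one obtains uniform control of the doubling index on a range of scales, and in particular a uniform bound $\sup_{B(0,1/4)}|v_k|\leq C_1(n,A,K)$. Interior elliptic regularity for the smooth operator $A$ then upgrades this to uniform $C^m$ bounds on any compact subset of $B(0,1/4)$ for every $m\geq 0$.

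By a diagonal Arzel\`a--Ascoli argument, a subsequence (still denoted $v_k$) converges in $C^{\infty}_{\mathrm{loc}}$ to a limit $v_\infty$ which again solves $\mathrm{div}(A\nabla v_\infty)=0$. Passing to the limit in the pointwise derivative bounds above gives $D^{\alpha}v_\infty(0)=0$ for every multi-index $\alpha$, so the strong unique continuation property for smooth elliptic operators forces $v_\infty\equiv 0$. On the other hand, since the closed ball $\overline{B(0,1/128)}$ is compact, we may select $x_k\in \overline{B(0,1/128)}$ with $|v_k(x_k)|=1$; a further subsequence gives $x_k\to x_\infty$, and uniform convergence yields $|v_\infty(x_\infty)|=1$, contradicting $v_\infty\equiv 0$.

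The main obstacle I anticipate is the first step: converting the nodal-volume hypothesis into a genuine normal-family-type bound on $\{v_k\}$. This is precisely the content of Theorem \ref{doubling bound}, which turns the Hausdorff-measure hypothesis into a quantitative bound on the doubling index; without it the rescaled sequence need not be precompact and the limiting procedure collapses. Once this quantitative step is in hand, the rest of the argument is a standard compactness-plus-strong-unique-continuation contradiction.
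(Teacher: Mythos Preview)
Your argument is correct and takes a genuinely different route from the paper. The paper argues constructively: after normalizing $\sup_{B(0,1/2)}|u|=1$ and using the doubling bound to get $\sup_{B(0,2^{-l})}|u|\geq 2^{-lC_1}$, it writes out the Taylor expansion at the origin, controls the remainder by interior elliptic estimates, and then explicitly chooses $N+1=2C_1$ and an appropriate $l$ so that the remainder is absorbed. Your approach replaces this quantitative Taylor-remainder computation by a soft compactness-plus-unique-continuation argument. The trade-off is the expected one: the paper's proof yields a concrete $N$ (roughly twice the doubling-index bound), while yours is cleaner but nonconstructive in $C$ and $N$.

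One small point to tighten: from Theorem \ref{doubling bound} as stated you only get $N_{v_k}(0,1/32)\leq C_0$, and Lemma \ref{almost monotone of doubling index} propagates bounds \emph{downward} in scale ($N(r)\leq CN(R)+C$ for $4r<R$), so it does not by itself yield control of $N(0,1/64)$, $N(0,1/128)$, or the claimed bound on $\sup_{B(0,1/4)}|v_k|$. What you actually need (and what the proof of Theorem \ref{doubling bound} gives) is a frequency bound $\beta(\cdot,r)\leq C$ at a macroscopic scale near $1/2$, from which Lemma \ref{frquency&doubling index} and the almost monotonicity of the \emph{frequency} give $N(0,r)\leq C$ for all $r$ below a fixed threshold; alternatively, the paper simply rescales so that $N(0,1/4)\leq C$. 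Either route closes the gap and the rest of your compactness argument goes through unchanged.
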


\begin{proof}
    Case 1: If $Z(u)\cap B(0,1/64)= \emptyset$, then by Harnack inequality
    \begin{equation}
        \sup_{B(0,1/128)}|u|\leq C(n,A)|u(0)|.
    \end{equation}
    Case 2: If $Z(u)\cap B(0,1/64)\neq \emptyset$, then by the assimption 
   \begin{equation}
       \mathcal{H}^{n-1}(Z(u)\cap B(0,1))\leq K.
   \end{equation} 
   Therefore we apply Theorem \ref{doubling bound} to obtain that
 \begin{equation}
       N(0,1/32)\leq C(n,A,K).
   \end{equation}
  Without loss of generality, by rescaling, we may assume that $N(0,1/4)\leq C(n,A,K)$ and $\sup\limits_{B(0,1/2)}|u|=1$. Now it suffices for us to prove that there exist constants $C_{0}(n,A,K)$ and $N(n,A,K)$ such that
    \begin{equation}
\sum_{|\alpha|=0}^{N}|D^{\alpha}u(0)|\geq C_{0}.
    \end{equation}
    By $N(0,1/4)\leq C(n,A,K)$ and the almost monotonicity of doubling index, i.e., Lemma \ref{almost monotone of doubling index}, we have 
    \begin{equation}
        N(0,2^{-j})\leq C(n,A)\cdot N(0,1/4)+C(n,A)\leq C_{1}(n,A,K)
    \end{equation} for all $j\geq 2$. Then by definition of doubling index, 
    \begin{equation}
        \frac{\sup\limits_{B(0,2^{1-j})}|u|}{\sup\limits_{B(0,2^{-j})}|u|}\leq 2^{C_{1}}.
    \end{equation}
    Since $\sup\limits_{B(0,1/2)}|u|=1$, we deduce that
    \begin{equation}
        \sup_{B(0,2^{-l})}|u|\geq 2^{-l C_{1}},
    \end{equation}
    where $l\geq 2$ is a integer to be chosen later. Since $A$ is smooth, the standard regularity theory of elliptic equations implies that $u$ is also smooth. Applying Taylor expansion at $0$, we obtain
    \begin{equation}
        u(x)=\sum_{|\alpha|=0}^{N}\frac{1}{\alpha !}|D^{\alpha}u(0)|x^{\alpha}+\sum_{|\beta|=N+1}\frac{1}{\beta !}|D^{\beta}u(\xi_{\beta})|x^{\beta},
    \end{equation}
    where the point $\xi_{\beta}$ lies in the segment connecting $0$ and $x$. Now we take $|x|\leq r=2^{-l-1}$ and get
    \begin{equation}
        2^{-lC_{1}}\leq \sup_{B(0,2^{-l-1})}|u|\leq \sum_{|\alpha|=0}^{N}|D^{\alpha}u(0)|+\sup_{y\in B(0,r),|\beta|=N+1}|D^{\beta}u(y)|2^{-(l+1)(N+1)}\sum_{|\beta|=N+1}\frac{1}{\beta !}.
    \end{equation}
    Then we use the fact that
    \begin{equation}
    (x_{1}+\cdots+x_{n})^{N+1}=\sum_{|\beta|=N+1}\frac{(N+1)!}{\beta!}x^{\beta},
    \end{equation}
    and take $x_{1}=\cdots=x_{n}=1$, we have
    \begin{equation}
        2^{-lC_{1}}\leq \sum_{|\alpha|=0}^{N}|D^{\alpha}u(0)|+\sup_{y\in B(0,1/4),|\beta|=N+1}|D^{\beta}u(y)|2^{-(l+1)(N+1)}\frac{n^{N+1}}{(N+1)!}.
    \end{equation}
    By elliptic interior estimate, we have
    \begin{equation}
        \sup_{y\in B(0,1/4),|\beta|=N+1}|D^{\beta}u(y)|\leq C_{2}(n,A,N) \sup_{B(0,1/2)}|u|=C_{2},
    \end{equation}
    where constant $C_{2}$ depends on $n,\lambda,N$ and the $C^{N}$-norm of $a^{ij}$.\\
    Therefore, combining the above two inequalities, the following estimate holds
    \begin{equation}
        \sum_{|\alpha|=0}^{N}|D^{\alpha}u(0)|\geq 2^{-lC_{1}}-C_{3}(n,A,N)\cdot 2^{-l(N+1)}=2^{-2lC_{1}}(2^{lC_{1}}-C_{3}(n,A,N)\cdot 2^{2lC_{1}-l(N+1)})
    \end{equation}
    where $C_{3}= \frac{n^{N+1}C_{2}}{(N+1)!}$. Now we choose $N+1=2C_{1}(n,A,K)$, then $C_{3}$ will only depend on $n,A$ and $K$. Furthermore we can choose constant $l=l(n,K,A)$ such that 
    \begin{equation}
        2^{l\cdot C_{1}(n,A,K)}\geq 2C_{3}(n,A,K).
    \end{equation}
Then we can conclude that
\begin{equation}
\sum_{|\alpha|=0}^{N}|D^{\alpha}u(0)|\geq 2^{-2lC_{1}}C_{3}:=C_{0}(n,A,K).
    \end{equation}
\end{proof}

\subsection{Propagation of the Smallness Results}

In this subsection, we will finish our proof with the help of propagation of the smallness results proved by Logunov and Malinnikova in \cite{MR4249624}. 
\begin{lemma}\label{propagation of smallness}
    Let $u$ be a solution to $\mathrm{div}(A\nabla u)=0$ in $B(x,2r)$. For any subset $E\subset B(x,r)$ with positive Lebesgue measure, we have 
    \begin{equation} \label{4.1}
        \sup_{B(x,r)}|u|\leq C_{1}\sup_{E}|u|(C_{1}\frac{|B(x,r)|}{|E|})^{C_{1}N(B(x,r))} 
    \end{equation}
    for some $C_{1}$ only depending on $n$ and $A$.
\end{lemma}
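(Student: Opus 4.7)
The plan is to combine a Hadamard three-balls inequality with an iterative chaining argument, propagating smallness from $E$ to all of $B(x,r)$ with the per-step loss controlled by the doubling index $N := N(B(x,r))$.

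First, I would establish a three-balls (log-convexity) inequality: for any concentric balls $B_\rho \subset B_{2\rho} \subset B_{3\rho}$ contained in $B(x, 2r)$, there exist a fixed exponent $\alpha \in (0,1)$ and a constant $C$ depending only on $n$ and $A$ such that
$$\sup_{B_{2\rho}}|u| \leq C \bigl(\sup_{B_\rho}|u|\bigr)^{\alpha}\bigl(\sup_{B_{3\rho}}|u|\bigr)^{1-\alpha}.$$
This is the standard elliptic Hadamard lemma, and can be derived from the almost-monotonicity of the doubling index together with the cross-center comparison established in Section 2.

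Next, by a Vitali covering and pigeonhole argument, I would find a ball $B_0 = B(x_0, \rho_0) \subset B(x, r)$ of radius $\rho_0 \geq c_n r (|E|/|B(x,r)|)^{1/n}$ on which $E$ has density at least a universal constant $c_n > 0$. On this $B_0$, a single-ball Remez-type estimate yields
$$\sup_{B(x_0, \rho_0/2)}|u| \leq C^{C N} \sup_{E \cap B_0}|u|.$$
Then, iterating the three-balls inequality along a chain of $k \leq C \log_2(r/\rho_0)$ overlapping balls of comparable radii connecting $B(x_0, \rho_0/2)$ to every point of $B(x, r)$, and absorbing terms involving $\sup_{B(x, 2r)}|u|$ via the doubling bound $\sup_{B(x, 2r)}|u| \leq 2^N \sup_{B(x,r)}|u|$, the cumulative loss is at most $(C|B(x,r)|/|E|)^{CN}$, matching the claimed bound.

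The main technical difficulty is the single-ball Remez estimate: bounding $\sup_{B(x_0, \rho_0/2)}|u|$ by $C^{CN} \sup_{E \cap B_0}|u|$ when all that is given is a lower density bound for $E$ inside $B_0$. This itself is proved by a further iterative use of the three-balls inequality at dyadically finer scales, where one must verify that the intermediate doubling indices on every sub-ball stay bounded by a fixed multiple of $N$. Careful bookkeeping via the almost-monotonicity of the doubling index and the cross-center comparison from Section 2 ensures that all constants depend only on $n$ and $A$, with $N$ appearing only in the exponent, exactly as in the statement of the lemma.
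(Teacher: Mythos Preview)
The paper does not actually prove this lemma: it is quoted verbatim from Logunov and Malinnikova's lecture notes \cite{MR4249624} and used as a black box. So there is no ``paper's own proof'' to compare against; the relevant comparison is with the argument in that reference.

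Your outline is broadly aligned with the standard proof there, which does proceed via the three-balls inequality and an iterative density argument. However, your decomposition into ``find a single dense ball $B_0$, prove a Remez estimate on $B_0$, then chain outward'' is not quite how the argument is organized, and it hides the real content. The step you flag as the main difficulty---the single-ball Remez estimate on a set of fixed density---is in fact essentially the full lemma, since once you have it the chain-of-balls step only contributes a factor $C^{CN\log(r/\rho_0)} = (r/\rho_0)^{CN}$, which already gives the claimed bound without the preliminary density normalization. In the actual proof one runs a single dyadic iteration: at each scale one observes that either $|u|$ is already small on the current ball, or by a pigeonhole on the measure of $E$ there is a half-radius sub-ball where $E$ still occupies a fixed fraction; one then applies the three-balls inequality and the doubling bound to pass to that sub-ball, losing a factor $C^{CN}$ each time. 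The iteration terminates after $O(\log(|B|/|E|))$ steps, which produces the exponent $CN$ on $|B|/|E|$ directly.

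So your sketch is not wrong, but the two-stage structure is redundant and the phrase ``single-ball Remez-type estimate'' papers over exactly the inductive mechanism that does all the work.
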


This theorem states that we can propagate the bound to a larger domain once we know the doubling index is bounded. We need to mention here that the doubling index bound in Theorem \ref{them 2.8} also holds for any point.

Now based on Theorem \ref{them 2.8} and the above theorem, we can finish the proof of Theorem \ref{thm1.3}.
\begin{proof}
    Define the set $P$ as the following:
    \begin{equation}
        P:=\{r\in (0,\frac{3}{4}):\max_{B(0,r)}|u|\leq C\sum_{|\alpha|=0}^{N}|D^{\alpha}u(0)| \text{ holds for some }C,N \text{ depending on } n,\lambda,  K\}
    \end{equation}
    It is clear that $P\neq \emptyset$ and $1/128\leq \sup\limits_{r\in P}r<3/4$. If $r_{0}\in P$, then for all $r<r_{0}$, we have $r\in P$. We will prove $1/2\in P$. Now assuming that $r_{0}\in P$ and fixing a constant $r$ small enough with $r_{0}+36r<1$, we claim that $r_{0}+r\in P$. If the claim holds, after a finite-steps process, we can easily conclude that $1/2\in P$.\\
    
    Now we take any point $x\in \partial B(0,r_{0})$ and let $E=B(x,r)\cap B(0,r_{0})$. By Lemma \ref{propagation of smallness}, we have
    \begin{equation}
        \sup_{B(x,r)}|u|\leq C_{1}\sup_{E}|u|(C_{1}\frac{|B(x,r)|}{|E|})^{C_{1}N(x,r)}.
    \end{equation}
Next we prove  
\begin{equation}
        \sup_{B(x,r)}|u|\leq C\sup_{E}|u|\leq C\sum_{|\alpha|=0}^{N}|D^{\alpha}u(0)|
    \end{equation}
Therefore, by a compactness argument, we obtain $r+r_{0}\in P$. We will consider the following two cases.
   Case 1: If $B(x,2r)\cap Z(u)=\emptyset$, by Harnack inequality, we have
    \begin{equation}
        \sup_{B(x,r)}|u|\leq C |u(x)|\leq C\sup_{E}|u|\leq C\sum_{|\alpha|=0}^{N}|D^{\alpha}u(0)|
    \end{equation}
    Case 2: If $B(x,2r)\cap Z(u)\neq\emptyset$, we may assume that $y$ is in this set. Then $B(y,1-2r-r_{0})\subset B(0,1)$. Therefore, by $\mathcal{H}^{n-1}(Z(u)\cap B(0,1))\leq K$ and Theorem 1.2 in \cite{MR4814921}, i.e., the almost sharp lower bound estimate, we have
    \begin{equation}
      N(y,\frac{1-2r-r_{0}}{8})\leq C(n,A,K).
    \end{equation}
    By definition, we have the following inequalities
    \begin{align}
    \begin{split}       N(x,3r)&=\log_{2}\frac{\sup_{B(x,6r)}|u|}{\sup_{B(x,3r)|u|}}\\
        &\leq \log_{2}\frac{\sup_{B(y,8r)}|u|}{\sup_{B(y,r)}|u|}\\
        &=N(y,r)+N(y,2r)+N(y,4r)\leq C(n,A,K).\\
        \end{split}
    \end{align}
    where in the last inequality, we use the assumption $r_{0}+36r< 1$ and the Lemma \ref{almost monotone of doubling index}. Therefore we have $N(x,r)\leq C(n,A,K)$.
    Then we use propagation of the smallness results to obtain the following estimate
    \begin{equation}
        \sup_{B(x,r)}|u|\leq C_{1}\sup_{E}|u|(C_{1}\frac{|B(x,r)|}{|E|})^{C_{1}N(x,r)}\leq C \sum_{|\alpha|=0}^{N}|D^{\alpha}u(0)|.
    \end{equation}
    As a result, we have $r+r_{0}\in P$ and furthermore we get $\frac{1}{2}\in P$.
\end{proof}
\subsection{General Cases and The Inverse Problem}

Next, we will prove the Corollary \ref{cor1.4}. It is trivial with a standard trick and an already-known result.

\begin{proof}[Proof of Corollary \ref{cor1.4}]
    Let $h(x,t)=u(x)e^{\sqrt{\mu}t}$ with $t\in \mathbb{R}$ and $A'(x,t)=\mathrm{diag}\{A(x),1\}$. Then we can check that $h$ is the solution to the equation $\mathrm{div}(A'\nabla h)=0$ defined on the domain $B^{n}(0,1)\times \mathbb{R}$. By Logunov's result in \cite{MR3739231}, we know that
    \begin{equation}
        \mathcal{H}^{n-1}(Z(u)\cap B^{n}(0,1))\leq C\mu^{\alpha}
    \end{equation}
    As a result, for a fixed $\mu$, we have
    \begin{equation}
        \mathcal{H}^{n}(Z(h)\cap B^{n+1}(0,1))\leq C\mu^{\alpha}
    \end{equation}
    Recall that $B^{n+1}(0,1)$ is the unit ball in $n+1$ dimensional space $\mathbb{R}^{n+1}$. By Theorem \ref{thm1.3}, we get
    \begin{equation}
        \sup_{B^{n+1}(0,1/2)}|h|\leq C\sum_{|\alpha|=0}^{N}|D^{\alpha}h(0)|
    \end{equation}
    A simple computation shows that
    \begin{equation}
        D^{\alpha}h=\sum_{\beta\leq\alpha}(\sqrt{\mu})^{|\beta|}D^{\alpha-\beta}u\cdot e^{\sqrt{\mu}t}
    \end{equation}
    And also
    \begin{equation}
\sup_{B^{n+1}(0,1/2)}|h|=\sup_{|x|^{2}+t^{2}\leq 1/4}|u(x)e^{\sqrt{\mu}t}|\geq \sup_{|x|^{2}+t^{2}\leq 1/4}|u(x)|=\sup_{B^{n}(0,1/2)}|u(x)|
    \end{equation}
    Combining the above two inequalities, we have
    \begin{equation}
        \max_{B^{n}(0,1/2)}|u|\leq C\sum_{|\alpha|=0}^{N}|D^{\alpha}u(0)|
    \end{equation}
    for some $C,N$ depending on $\mu,A,n$.
    
\end{proof}

Now as a corollary of main theorem, we can also deal with the general elliptic equations with smooth coefficients.
\begin{proof}[Proof of Theorem \ref{general smooth theorem}]
    Here we apply the transformation introduced in \cite{Hanlinbook}, which helps to transform the general elliptic equation into the divergence form. Therefore we can apply the above theorem.\\
\indent In details, for $(x,x_{n+1},x_{n+2})$ we defiene
\begin{equation}
    v(x,x_{n+1},x_{n+2})=(2-x_{n+2})(2-x_{n+1})u(x).
\end{equation}
Then a direct calculation implies that $v$ solves the equation
\begin{equation}
    \bar{a}^{ij}\partial_{ij}v=0\;\text{in}\;B^{n+2}(0,1)\subset \mathbb{R}^{n+2}.
\end{equation}
Where 
\begin{equation}
    [\bar{a}^{ij}]=\left[
    \begin{matrix}
        (a^{ij})_{n\times n} & 0 & -\frac{b_{i}}{2}(2-x_{n+2})\\
        0 &1 & \frac{(2-x_{n+2})(2-x_{n+1})\cdot c}{2}\\
         -\frac{b_{i}}{2}(2-x_{n+2})& \frac{(2-x_{n+2})(2-x_{n+1})\cdot c}{2} &M_{1}
    \end{matrix}
    \right]_{(n+2)\times (n+2)}
\end{equation}
with the positive constant $M_{1}$ that is large enough to ensure the uniformly ellipicity. 
Now we denote $\bar{x}=(x,x_{n+1},x_{n+2})$, for $(\bar{x},x_{n+3})\in \mathbb{R}^{n+3}$, we define the function
\begin{equation}
    w(\bar{x},x_{n+3})=v(\bar{x})
\end{equation} 
Then $w$ satisfies the equation
\begin{equation}
    \partial_{i}(\hat{a}^{ij}\partial_{j}w)=0 \;\text{in}\;B^{n+3}(0,1)\subset \mathbb{R}^{n+3}.
\end{equation}
Where \begin{equation}
    [\hat{a}^{ij}]=\left[
    \begin{matrix}
        (\bar{a}^{ij})_{(n+2)\times (n+2)} & x_{n+3}\bar{b}\\
        x_{n+3}\bar{b}^{T} & M_{2}\\
        
    \end{matrix}
    \right]_{(n+3)\times (n+3)}
\end{equation}
with $\bar{b}_{i}=-\partial_{i}\bar{a}^{ij}$ and the constant $M_{2}>0$ is also large enough to ensure the uniformly ellipticity of $\hat{a}^{ij}$.\\
\indent Noting that $w(x,x_{n+1},x_{n+2},x_{n+3})=0$ if and only if $u(x)=0$ for $x\in B(0,1)\subset \mathbb{R}^{n}$. Therefore, a simple inclusionship implies that
\begin{equation}
    \mathcal{H}^{n+2}(Z(w)\cap B^{n+3}(0,1))\leq 100\mathcal{H}^{n-1}(Z(u)\cap B^{n}(0,1))\leq 100K.
\end{equation}
The last inequality uses our assumption. Since $\hat{a}^{ij}$ are all smooth, then applying Theorem \ref{thm1.3} to $w$, there exist two positive constants $C,N$ depending only on  $n,a^{ij},b^{i},c$ and $K$ such that 
    \begin{equation} 
        \max_{B^{n+3}(0,1/2)}|w|\leq C\sum_{|\alpha|=0}^{N}|D^{\alpha}w(0)| .
    \end{equation}
Recall that $w(x,x_{n+1},x_{n+2},x_{n+3})=(2-x_{n+2})(2-x_{n+1})u(x)$ and the fact that
\begin{equation}
   \{x\in B^{n}(0,1/4),|x_{n+1}|,|x_{n+2}|,|x_{n+3}|\leq 1/4\} \subset B^{n+3}(0,1/2).
\end{equation}
Then we can conclude that
\begin{equation} 
        \max_{B^{n}(0,1/4)}|u|\leq C\sum_{|\alpha|=0}^{N}|D^{\alpha}u(0)| .
    \end{equation}
\end{proof}

\begin{remark}
    Above two proofs illustrate two different ways to transform general elliptic PDEs into divergence form. That is why we prove these two theorems independently. Also, the first proof shows that the constant $N$ can de independent of $\mu$.
\end{remark}

Now we come to the proof of Theorem \ref{inverse theorem}.
\begin{proof}[Proof of  Theorem \ref{inverse theorem}]
    By our assumption and elliptic interior estimate, we have \begin{equation}
        \max\limits_{B(0,2)}|u|\leq K(n,A,C,N)\max\limits_{B(0,1)}|u|,
    \end{equation}
    which by definition deduces the inequality $N(0,1)\leq K(n,A,C,N)$.\\
\indent By Lemma \ref{frquency&doubling index} , Lemma \ref{almost monotone of doubling index} and the almost monotonicity of doubling index, we have  
\begin{equation}
   \beta:=\beta(0,s)\leq C+C\cdot N(0,s)\leq C+C\cdot N(0,1)\leq K(n,A,C,N).
\end{equation}
Therefore by Theorem 1.4 in \cite{MR3688031}, the size of nodal set has an exponential upper bound with respect to frequency function, then for any $s\leq r_{0}$
\begin{equation}
    \mathcal{H}^{n-1}(Z(u)\cap B(0,s))\leq (C(n,A)\beta)^{\beta}s^{n-1}\leq K(n,A,C,N)s^{n-1}.
\end{equation}

\end{proof}
\section{A Version of Nadirashvili's Conjecture with $C^{1}$ Coefficients}

In this section, we will prove a version of Nadirashvili's conjecture with only $C^{1}$ coefficients. In this case, the derivatives may not be well-defined. Motivated by unique continuation property, we will use the $L^{2}$ norm.

\begin{definition}
$S=\partial B_1$, $H_m\left( S \right) :=\left\{ f|_S, f \text{ is homogeneous harmonic polynomial of degree }m \right\}$ 
\end{definition}
Noting that by the standard theory of spherical harmonics, see Theorem 5.12 in \cite{axler2013harmonic}, we have
\begin{equation}
    L^{2}(S)=\bigoplus_{m=0}^{\infty}H_{m}(S).
\end{equation}
\begin{proof}[Proof of Theorem \ref{thm1.5}]
    For convenience, we will let $S:=\partial B(0,1),L^{2}(S)=\bigoplus_{m=0}^{\infty}H_{m}(S)$. Assume that $u|_{S}=\sum_{i=0}^{\infty}\Phi_{m}$ in the $L^{2}$ sense and $\Phi_{m}\in H_{m}(S)$. We will consider the following Dirichlet problem:
    \begin{align*}
        \Delta v=0\quad &\text{in } B(0,1)\\
        v=u \quad &\text{on }\partial B(0,1)
    \end{align*}
    Let $g(t)=\int_{\partial B(0,1)}|\nabla(v+t(v-u))|^{2}$ with $t\in[0,1]$. Then, we take derivative on the both size to get
    \begin{align}
    \begin{split}
        g'(t)&=\int_{B(0,1)}2\nabla v\cdot\nabla(v-u)+2t\int_{B(0,1)}|\nabla(v-u)|^{2}\\
        &\geq \int_{B(0,1)}2\nabla v\cdot\nabla(v-u)\\
        &=\int_{B(0,1)}2\Delta v\cdot(v-u)+\int_{\partial B(0,1)}2\frac{\partial v}{\partial n}(v-u)\\
        &=0
       \end{split} 
    \end{align}
    Therefore, we know that the function $g(t)$ is increasing. So we get
    \begin{equation}
        \int_{B(0,1)}|\nabla u|^{2}\geq \int_{B(0,1)}|\nabla v|^{2}
    \end{equation}
    Now we let $a_{d}:=\left<u,\Phi_{m}\right>_{L^{2}(S)}$ and $f_{k}:=\sum_{m=0}^{k}\Phi_{m}$. By definition, we get
    \begin{equation} \label{5.1}
        \int_{\partial B(0,1)}|v|^{2}=\sum_{d=0}^{\infty}a_{d}^{2} 
    \end{equation}
    and 
    \begin{equation}
        \lim_{k\rightarrow \infty}||v-f_{k}||_{L^{2}(S)}=0
    \end{equation}
    Since $v$ and $f_{k}$ are all harmonic functions, by standard elliptic estimate, we get
    \begin{equation}
        \sup_{B(0,1-\varepsilon)}|v-f_{k}|\leq C||v-f_{k}||_{L^{2}(S)}
    \end{equation}
    for a fixed $\varepsilon\in (0,1)$. So we have
    \begin{equation*}
        \lim_{k\rightarrow \infty}\sup_{B(0,1-\varepsilon)}|v-f_{k}|=0
    \end{equation*}
    By harmonicity, we have
    \begin{equation}
        \int_{B(0,1)}|\nabla v|^{2}\geq  \int_{B(0,1-\varepsilon)}|\nabla v|^{2}=\lim_{k\rightarrow \infty}\int_{B(0,1-\varepsilon)}|\nabla f_{k}|^{2}=\sum_{d=0}^{\infty} da_{d}^{2}(1-\varepsilon)^{2d+n-2}
    \end{equation}
    Since $\varepsilon$ is arbitrary, we now let $\varepsilon\rightarrow 0$ to get
    \begin{equation} \label{5.2}
        \int_{B(0,1)}|\nabla u|^{2}\geq \int_{B(0,1)}|\nabla v|^{2}\geq \sum_{d=0}^{\infty}da_{d}^{2} 
    \end{equation}
    By the definition of frequency function, we know that
    \begin{equation}
        \int_{B(0,1)}|\nabla u|^{2}\leq N_{0}\int_{\partial B(0,1)}|u|^{2}
    \end{equation}
    for some integer $N_{0}=N_0 (N)$ big enough. Then by \eqref{5.1} and \eqref{5.2}, we have
    \begin{align}
    \begin{split}
        \sum_{d=0}^{\infty}da_{d}^{2}&\leq N_{0}\sum_{d=0}^{\infty}a_{d}^{2}\\
        \sum_{d=N_{0}+1}^{\infty}(d-N_{0})a_{d}^{2}\leq &\sum_{d=0}^{N_{0}-1}(N_{0}-d)a_{d}^{2}\leq N_{0}\sum_{d=0}^{N_{0}}a_{d}^{2}
        \end{split}
    \end{align}
    Then we have
    \begin{equation}
        \int_{B(0,1/4)}|u|^{2}\leq C\int_{\partial B(0,1)}|u|^{2}\leq C\sum_{d=0}^{\infty}a_{d}^{2}\leq C\sum_{d=0}^{N_{0}}a_{d}^{2}
    \end{equation}
    where the constant $C$ in the last formula depending on $N$.
    
\end{proof}
\begin{proof}[Proof of Conjecture 1.3 by Theorem \ref{thm1.5}]Without loss of generality, we assume that
$$\,\, u=\sum_{m=0}^{\infty}{\varPhi _m}\,\,\text{in}\,\,B_1 ,\,\,\varPhi _m \text{ is hhp of degree m}. $$

$$H_m:=\left\{ \text{all hhp of degree m}\right\}.
\left< f,g \right> :=\int_{\partial B_1}{fg\,\,\mathrm{dS}}$$
Here \textit{hhp} means the homogeneous harmonic polynomial. Equipped with the above inner product $\left< \,, \right>$, the linear space $H_m$ becomes a finite dimensional Hilbert space. Then we consider the following standard norm for polynomials.
$$\left\| f \right\| :=\sum_{|\alpha |=m}{|D^{\alpha}f\left( 0 \right) |}$$
Equipped with the above norm $\left\| \cdot \right\| $, the space $H_m$ becomes a finite dimensional Banach space. Since any two norms on finite dimensional Banach space are equivalent, there exists a constant $C$ only depending on $m$ such that
\begin{equation}
C^{-1}\left\| \varPhi \right\| \leqslant \sqrt{\left< \varPhi ,\varPhi \right>}\leqslant C\left\| \varPhi \right\| \,\,, \forall \varPhi \in H_m
    \end{equation}
So by Theorem \ref{thm1.5} we have the following estimate
    \begin{align}
        \begin{split}
        \underset{B(0,1/8)}{\sup}|u|^{2}& \leq C\int_{B(0,1/4)}{u^2}\leq C\sum_{m=0}^T{\left< u,\varPhi _m \right> =}C\sum_{m=0}^T{\left< \varPhi _m,\varPhi _m \right>}\\
        &\leq C\sum_{m=0}^{T}\left( \sum_{|\alpha|=m} |D^{\alpha}\varPhi _m\left( 0 \right) | \right)^{2}=C\sum_{m=0}^{T}\left( \sum_{|\alpha|=m} |D^{\alpha}u\left( 0 \right) | \right)^{2}\\
        &\leq C\sum_{m=0}^{T}\sum_{|\alpha|=m}|D^{\alpha}u(0)|^{2}
        \end{split}
    \end{align}
    Therefore we have proven Conjecture 1.3 by Theorem \ref{thm1.5}.
\end{proof}
\section{Several Applications}

\subsection{Liouville-type Theorem and its Application}

First, we will prove Theorem \ref{thm1.7}. It is based on the standard elliptic estimates.

\begin{proof}[Proof of Theorem \ref{thm1.7}]
    $``\Longrightarrow"$
    
   Noting that $A$ is a constant positive matrix, and we denote $v(x):=u(rx)$. Then $v$ also satisfies the equation $\mathrm{div}(A\nabla v)=0$ and 
    \begin{equation}
        \mathcal{H}^{n-1}(Z(v)\cap B(0,1))\leq K.
    \end{equation}
    Therefore, there exist constants $C,N$ such that
    \begin{equation}
        \sup_{B(0,1/2)}|v|\leq C\sum_{|\alpha|=0}^{N}|D^{\alpha}v(0)|
    \end{equation}
    By our definition of $v$, we get
    \begin{equation}
        \sup_{B(0,r/2)}|u|\leq C\sum_{i=0}^{N}\sum_{|\alpha|=i}|D^{\alpha}u(0)|r^{i}
    \end{equation}
    By gradient estimate, we have
    \begin{equation} \label{inequality}
        \sup_{B(0,r/4)}\sum_{|\alpha|=N+1}|D^{\alpha}u|\leq  \frac{C}{r^{N+1}}\sup_{B(0,r/2)}|u|\leq\frac{C}{r^{N+1}}\sum_{i=0}^{N}\sum_{|\alpha|=i}|D^{\alpha}u(0)|r^{i}.
    \end{equation}
    Therefore, we have the following limit exists
    \begin{equation}
        \small{\underset{r\rightarrow \infty}{\lim}\underset{B\left( 0,r/4 \right)}{\sup}\sum_{|\alpha |=N+1}{|}D^{\alpha}u|=0}.
    \end{equation}
    Hence $u$ is polynomial with degree no more than $N$ since $u$ is smooth.\newline
    $``\Longleftarrow "$
    Now, we assume that $u$ is a harmonic polynomial. Let $m:=\mathrm{deg} \,u$, then we have
    \begin{equation}
        \beta(0,r)\equiv m,\forall r>0.
    \end{equation}
    Then by the classical upper bound of nodal set, with respect to frequency function, we have
    \begin{equation}
        \mathcal{H}^{n-1}(Z(u)\cap B(0,r))\leq Kr^{n-1}.
    \end{equation} 
    where $K=K(n,A,m)$.
\end{proof}

\begin{remark}
    If we consider general PDEs $\mathrm{div}(A(x)\nabla u)=0$, then by our definition, $v$ should statisfies a different equation $\mathrm{div}(A(rx)\nabla v)=0$. But the constants $C$ and $N$ in \eqref{inequality} depend on the components of $A$. Our proof fails for this case as the limit may not exists.
\end{remark}

Next, we will prove Corollary \ref{cor1.8}. It follows immediately from the above theorem.
\begin{proof}[Proof of Corollary \ref{cor1.8}]
    By the fact that $u$ is a polynomial and Theorem \ref{thm1.7}, we know that there exist constants $K$ and $r_0$ such that
    \begin{equation}
        \mathcal{H}^{n-1}(Z(u)\cap B(0,r))\leq Kr^{n-1}\quad r>r_0
    \end{equation} 
    By our assumption $Z(v)\subset Z(u)$, we know that for constants $K$ and $r_0$ above, we have the following estimate
    \begin{equation}
        \mathcal{H}^{n-1}(Z(v)\cap B(0,r))\leq Kr^{n-1}\quad r>r_0
    \end{equation}
    By Theorem \ref{thm1.7}, we know that $v$ is a polynomial.
\end{proof}

\subsection{Comparsion Theorem}

In this subsection, we will prove Theorem \ref{thm1.9}. We emphasize again that $A$ and $A'$ can be two different matrices in $\mathcal{A}_{1}$. The constants depend on the matrices since we need to use gradient estimate.

\begin{proof}[Proof of Theorem \ref{thm1.9}]
    Since $\beta_{u}(0,10)\leq N_{0}<\infty$, we have the following estimate
    \begin{equation}
        \mathcal{H}^{n-1}(Z(u)\cap B(x,r))\leq K
    \end{equation}
    for constant $K$ only depending on $N_{0}$ and $A$. As a result, the above estimate also holds for the nodal set of $v$. By Theorem \ref{thm1.3}, we know that there exist constants $C$ and $N$ such that
    \begin{equation}
        \sup_{B(0,2)}|v|\leq C\sum_{|\alpha|=0}^{N}|D^{\alpha}v(0)|\leq D\sup_{B(0,1)}|v|
    \end{equation}
    where in the last inequallity, we use standard gradient estimate and the fact that $D$ is a constant depending on $K$ and $A'$. By definition of doubling index and Lemma \ref{2.3}, we know that
    \begin{equation}
        \beta_{v}(0,1)\leq D_{0}
    \end{equation}
    for some constant $D_{0}$.
\end{proof}

\section{The Relationship between Two Conjectures}

In the last section, for the completeness of this paper, we prove the fact that the Conjecture 1.3 actually implies Conjecture 1.4.

\begin{proof} 
Now we assume that Conjecture 1.3 holds and fix a constant $K>0$.\\
Case 1: If $\mathcal{H}^{n-1}\left( Z\left( u \right) \cap B\left( 0,1 \right) \right) \geq K$, then Conjecture 1.4 holds.\\
Case 2: If $\mathcal{H}^{n-1}\left( Z\left( u \right) \cap B\left( 0,1 \right) \right) \leq K$, then by Conjecture 1.3, we have the following bound of doubling index:
    \begin{equation}
        N_{u}(0,1/4)\leq C.
    \end{equation}
Hence, by the classical lower bound of nodal set, we obtain
    \begin{equation}
        \mathcal{H}^{n-1}\left( Z\left( u \right) \cap B\left( 0,1 \right) \right) \geq \mathcal{H}^{n-1}\left( Z\left( u \right) \cap B\left( 0,1/2 \right) \right) \geq CN_{u} \left( 0,1/4 \right) ^{1-n}\geq C.
    \end{equation}
\end{proof}

\small
\bibliographystyle{alpha}
\bibliography{reference}
{\small 
\indent (Jiahuan Li) SCHOOL OF MATHEMATICS SCIENCE, UNIVERSITY OF SCIENCE AND TECHNOLOGY OF CHINA, HEFEI, 230022, CHINA.\;
Email.address: jiahuan@mail.ustc.edu.cn\\ \\
(Junyuan Wang) SCHOOL OF MATHEMATICAL SCIENCES, ZHEJIANG UNIVERSITY, HANGZHOU, 310058, CHINA.\;
Email address: wangjunyuan@zju.edu.cn \\ \\
(Zhichen Ying) SCHOOL OF MATHEMATICAL SCIENCES, ZHEJIANG UNIVERSITY, HANGZHOU, 310058, CHINA.\;
Email address: yingzc059@gmail.com
}

\end{document}